\newtheorem{theorem}{Theorem}
\newtheorem{proposition}[theorem]{Proposition}
\newtheorem{corollary}[theorem]{Corollary}
\newtheorem{construction}{Construction}
\newtheorem*{constructionkk}{Construction KK}
\newtheorem*{constructionad}{Construction AD}
\theoremstyle{remark}%
\newtheorem{example}{Example}%
\newtheorem{remark}{Remark}%
\theoremstyle{definition}%
\newcommand{\Z}{\mathbb Z}
\newcommand{\N}{\mathbb N}
\newcommand{\F}{\mathbb F}
\newcommand{\Fqn}{\F_{q^n}}
\newcommand{\Fq}{\F_q}
\newcommand{\Fqto}[1]{\F_{q^{#1}}}
\newcommand{\FqX}{\Fq[X]}
\newcommand{\cyclgroup}[1]{U_{#1}}
\newcommand{\cyclgen}[1]{\zeta_{#1}}
\newcommand{\of}[1]{(#1)}
\DeclareMathOperator{\ord}{ord}
\DeclareMathOperator{\lcm}{lcm}
\renewcommand{\char}{\text {char}}
\providecommand{\keywords}[1]{\small \noindent \textbf{Keywords:} #1}
\begin{document}

\title{Constructing irreducible polynomials recursively with a reverse composition method}

\author[1]{Anna-Maurin Graner \thanks{anna-maurin.graner@uni-rostock.de}}

\author[1]{Gohar M. Kyureghyan\thanks{gohar.kyureghyan@uni-rostock.de}}

\affil[1]{{Institute of Mathematics}, {University of Rostock}, Germany}

\maketitle

\begin{abstract}
	We suggest a construction of the minimal polynomial \(m_{\beta^k}\) of \(\beta^k\in \Fqn\) over \(\Fq\) from the minimal polynomial \(f= m_\beta\) for all positive integers \(k\) whose prime factors divide \(q-1\). The computations of our construction are carried out in \(\Fq\).  The key observation leading to our construction is that for \(k \mid q-1\) holds
	\[m_{\beta^k}\of{X^k} = \prod_{j=1}^{\frac kt} \cyclgen k^{-jn} f \of{\cyclgen k^j X},\]
	where  
	\(t= \max \{m\mid \gcd(n,k): f\of X = g \of {X^m},  g \in \FqX\}\) and \(\cyclgen{k}\) is a primitive \(k\)-th root of unity in \(\Fq\). The construction allows to construct a large number of irreducible polynomials over \(\Fq\) of the same degree. Since different applications require different properties, this large number allows the selection of the candidates with the desired properties.
\end{abstract}

\keywords{ recursive construction, irreducible polynomial, composition method, multiplicative order, \(k\)-th power, characteristic polynomial.}

\section{Introduction}
\label{intro}
Let \(q\) be a prime power and \(\Fq\) the finite field with \(q\) elements. For \(\beta \in \Fqto n\), we denote by \(m_{\beta}\in \FqX\) the minimal polynomial and by \(\chi_{\beta}   \in \FqX\) the characteristic polynomial of \(\beta\) over \(\Fq\). We call \(\beta\) a \textit{proper} element of \(\Fqn\) if \(\beta\in \Fqn\) and there does not exist a proper subfield \(\Fqto{m}< \Fqn\) such that \(\beta \in \Fqto{m}\). For an irreducible polynomial \(f\in \FqX\)  the smallest positive integer \(e\) such that \(f \mid X^e-1\)  or, equivalently, the multiplicative order of all of its roots, is called the \textit{order} of \(f\) and is denoted by \(e = \ord(f)\). If \(f\) has degree \(n\) and  the order of \(f\) equals \(q^n-1\), we call \(f\) a \textit{primitive} polynomial. Furthermore, for \(k\in \N\) we denote by \(U_k\) the group of the \(k\)-th roots of unity over \(\Fq\), that is, the roots of the polynomial \(X^k-1\in \FqX\). Note that \(U_k\) need not be a subset of \(\Fq\), but \(U_k\subseteq \mathbb E\) for an extension field \(\mathbb E\geq \Fq\). If \(\gcd(q,k)=1\), then  \(\vert \cyclgroup{k} \vert=k\) and throughout this paper we will use the notation \(\cyclgen{k}\) for a generating element of \(\cyclgroup{k}\). For a prime \(p\) and an integer \(m\) we denote by \(\nu_p\of{m}\) the \textit{\(p\)-adic valuation of \(m\)}, that is, \(\nu_p(m) = v\) if  \(m = p^v\cdot r\) with \(\gcd(p,r)=1\). \\

The composition method is widely used to construct irreducible polynomials over finite fields, see for example \cite{Cohen1992,Kyuregyan2004,Kyuregyan2006,Kyureghyan2010,McNay1995,Meyn1995,Panario2020,Ugolini2013}. Originally based on a theorem by Cohen \cite{Cohen1969}, with this method one composes an irreducible polynomial with polynomials or rational functions such that the resulting composition is irreducible itself. The composition usually is of higher degree than the initial polynomial. In order to find polynomials with good cryptographic or arithmetic properties, it is of interest to construct a large number of irreducible polynomials of the same degree from which good candidates can be selected.  In \cite{Kyureghyan2020} Kyureghyan and Kyuregyan introduce a recursive construction of irreducible polynomials which reverses the composition method. Here, an irreducible polynomial \(f\) is extracted from the composition \(f\of{X^2}\), which is obtained from the knowledge of its factorization. This construction yields a large number of polynomials of the same degree as the initial polynomial. During our search for possible generalizations of the recursive construction from \cite{Kyureghyan2020} (in this paper Construction KK), we noticed that the composition \(f\of{X^k}\) was studied by Albert \cite{Albert1956} and Daykin \cite{Daykin1965}. We will use the ideas from \cite{Albert1956} and \cite{Daykin1965} to generalize the results and extend the construction from \cite{Kyureghyan2020}.\\

Next we present results from \cite{Daykin1965} and \cite{Kyureghyan2020}. We use a unified notation and terminology so that the similarities of the approaches become visible. The following result  {\cite[Corollary 3]{Kyureghyan2020}} details all the information needed to formulate Construction KK.

	\begin{theorem}[\cite{Kyureghyan2020}]\label{Kyureghyan2020: Corollary 3}
		Let \(q\) be odd and \(f \in \FqX\), \(f \neq X\), be a monic irreducible polynomial of degree \(n\) and order \(e\). Let \(\beta \in \Fqn\) be a root of \(f\). Then the following statements hold:
		\begin{enumerate}[(i)]
			\item There exists a polynomial \(C \in \FqX\) such that \(C\of{X^2} = f\of X \cdot (-1)^n f\of{-X}\). More precisely, \( C \of X = (-1)^n \sum_{j=0}^n \sum_{u=0}^{2j} (-1)^u c_u c_{2j-u} X^j\), where \(c_0, \ldots, c_n\) are the coefficients of \(f\) and \(c_u = 0\) for \(u>n\).
			\item If \(C\) is irreducible, it is the minimal polynomial of \(\beta^2\) over \(\Fq\) and \(\ord \of C = \frac e {\gcd(e,2)}\).
			\item The polynomial \(C \) is irreducible if and only if there does not exist a polynomial \(D\in \FqX\) such that \(f\of X = D \of {X^2}\).
		\end{enumerate}
	\end{theorem}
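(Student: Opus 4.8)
The plan is to handle the three parts in order, since (i) and (ii) are largely computational and feed the substantive statement (iii).

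For (i), the key observation is that the product \(g(X) := (-1)^n f\of X f\of{-X}\) is an even polynomial. Replacing \(X\) by \(-X\) gives \(g\of{-X} = (-1)^n f\of{-X} f\of X = g\of X\), so every odd-degree coefficient of \(g\) vanishes and there is a unique \(C \in \FqX\) with \(C\of{X^2} = g\of X\). Writing \(f\of X = \sum_{u=0}^n c_u X^u\), I would read off the coefficient of \(X^{2j}\) in \(f\of X f\of{-X} = \sum_{i,l} (-1)^l c_i c_l X^{i+l}\) as \(\sum_{u=0}^{2j} (-1)^u c_u c_{2j-u}\) (with \(c_u = 0\) for \(u>n\)), which yields the stated formula for \(C\) after multiplying by \((-1)^n\). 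Inspecting the top coefficient (only the term \(u=n\) survives) shows that \(C\) is monic of degree \(n\), using \(c_n = 1\).

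For (ii), since \(f\of\beta = 0\) I get \(C\of{\beta^2} = g\of\beta = (-1)^n f\of\beta f\of{-\beta} = 0\), so \(\beta^2\) is a root of the monic polynomial \(C\). If \(C\) is irreducible it must therefore equal the minimal polynomial \(m_{\beta^2}\). The order statement then follows from the standard identity \(\ord\of{\beta^2} = \ord\of\beta / \gcd\of{\ord\of\beta,2} = e/\gcd\of{e,2}\) for the order of a power in a cyclic group, together with \(\ord\of C = \ord\of{\beta^2}\).

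The heart of the argument is (iii), and the crucial input is a degree dichotomy. Because \(m_{\beta^2} \mid C\) and \(\deg C = n\), the polynomial \(C\) is irreducible if and only if \(\deg m_{\beta^2} = [\Fq(\beta^2):\Fq] = n\), i.e. if and only if \(\Fq(\beta^2) = \Fq(\beta)\). Now \(\beta\) is a root of \(Y^2 - \beta^2 \in \Fq(\beta^2)[Y]\), so \([\Fq(\beta):\Fq(\beta^2)] \le 2\); hence \([\Fq(\beta^2):\Fq]\) is either \(n\) or \(n/2\), and \(C\) is reducible precisely when \([\Fq(\beta^2):\Fq] = n/2\). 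I would then match this to the evenness condition in both directions. If \(f\of X = D\of{X^2}\) for some \(D \in \FqX\), then \(D\of{\beta^2} = f\of\beta = 0\) forces \(\deg m_{\beta^2} \le \deg D = n/2 < n\), so \(C\) is reducible. Conversely, if \(\deg m_{\beta^2} = n/2\), then \(m_{\beta^2}\of{X^2}\) is monic of degree \(n\) and vanishes at \(\beta\), so it is divisible by \(f = m_\beta\) and, both being monic of degree \(n\), it equals \(f\); this exhibits \(D = m_{\beta^2}\) with \(f\of X = D\of{X^2}\).

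The main obstacle is less a technical difficulty than making the dichotomy airtight: one must confirm that the only possibilities for \([\Fq(\beta):\Fq(\beta^2)]\) are \(1\) and \(2\), and that the reducible case genuinely produces an even \(f\) rather than merely a drop in degree. It is also worth flagging where the hypothesis that \(q\) is odd is used — the reflection \(f\of X \leftrightarrow f\of{-X}\) and the substitution \(\cyclgen 2 = -1\) are only meaningful when \(\char\of{\Fq} \neq 2\), and the identity \(\ord\of{\beta^2} = e/\gcd\of{e,2}\) relies on \(2\) behaving as a genuine exponent in odd characteristic. The hypothesis \(f \neq X\) enters only to ensure \(\beta \neq 0\), so that the multiplicative orders above are defined.
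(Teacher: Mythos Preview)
Your proof is correct in all three parts. Note, however, that the paper does not give its own detailed proof of this theorem: it is quoted from \cite{Kyureghyan2020}, with the remark that it ``can be proved by elementary means,'' and later the paper observes that parts (i) and (ii) follow directly from Daykin's general results (Theorems~\ref{Daykin: Theorem 1} and~\ref{Daykin: Theorem 2}). In that sense your argument is exactly the kind of elementary proof the paper alludes to but omits.

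The one place where your route and the paper's framework genuinely diverge is part~(iii). You argue via the tower \(\Fq \subseteq \Fq(\beta^2) \subseteq \Fq(\beta)\) and the degree dichotomy \([\Fq(\beta):\Fq(\beta^2)] \in \{1,2\}\). The paper's own machinery (Theorem~\ref{Theorem: k div q-1 and f cyclgen j are equal iff} and Corollary~\ref{Corollary: Computation of m_beta^k}) instead analyses the factorisation \(\chi_{\beta^2}(X^2) = f(X)\cdot(-1)^n f(-X)\) and determines when the two factors coincide: they are equal precisely when \(f(X)=f(-X)\), i.e.\ when \(f(X)=D(X^2)\) for some \(D\), and this is exactly the case \(t=2\) in the general formula \(m_{\beta^k}(X^k)=\prod_{j=1}^{k/t}\cyclgen{k}^{-jn}f(\cyclgen{k}^jX)\). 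Your field-extension argument is shorter and more self-contained for \(k=2\); the paper's factorisation viewpoint is what generalises cleanly to arbitrary \(k\mid q-1\).
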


Theorem \ref{Kyureghyan2020: Corollary 3} can be proved by elementary means  and leads to the following construction, Construction KK, which is the key step of constructions {\cite[Construction 1]{Kyureghyan2020}} and {\cite[Construction 2]{Kyureghyan2020}}. Note that Theorem \ref{Kyureghyan2020: Corollary 3} (iii) allows to determine whether the polynomial \(C\) is irreducible by a simple examination of the coefficients of the polynomial \(f\). 
\newpage

	\begin{constructionkk}[\cite{Kyureghyan2020}]\label{Kyureghyan 2020: Construction}
		Let \(q\) be odd and \(f \in \FqX\), \(f \neq X\), a monic irreducible polynomial of degree \(n\) such that there does not exist a polynomial \(D\in \FqX\) with \(f \of X = D \of {X^2}\). To construct the monic {irreducible} polynomial \(C\in \FqX\) of degree \(n\) over \(\Fq\), do the following steps:
		\begin{enumerate}[Step 1.]
			\item Compute the product \((-1)^n \cdot f\of X \cdot f\of{-X} = C\of{X^2}.\)
		\item Extract \(C\) from the composition \(C\of{X^2}\).
		\end{enumerate} 
	\end{constructionkk}

A similar transformation with \(X^3\) has been studied in \cite{Albert1956} for primitive polynomials over \(\Fq\). The results from \cite{Albert1956} have been generalized in \cite{Daykin1965}. The next theorem shows that the polynomial \(C\) from Theorem \ref{Kyureghyan2020: Corollary 3} and Construction KK is in fact the characteristic polynomial of \(\beta^2\in \Fqn\) over \(\Fq\). This observation will allow us to develop the generalizations of the results in \cite{Kyureghyan2020}.

	\begin{theorem}[{\cite{Daykin1965}}]\label{Daykin: Theorem 1}
		Let \(f\in \FqX\) be a monic irreducible polynomial of degree \(n\) and \(\beta\in \Fqn\) a root of \(f\). Let \(k \in \N\) and \(k'= \frac k {\gcd(q,k)}\). Then  the characteristic polynomial \(\chi_{\beta^k}\in \FqX\) of \(\beta^k\in \Fqn\) over \(\Fq\) satisfies 
		\[\chi_{\beta^k}\of{X^k} = (-1)^{n (k+1)} \prod_{j=1}^{k} f\of{\cyclgen{k'}^j X}.\]
	\end{theorem}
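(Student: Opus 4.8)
The plan is to compute both sides as products of linear factors over a splitting field and match them. First I would record two structural facts. Since $f$ is monic irreducible of degree $n$ with root $\beta$, the element $\beta$ generates $\Fqn$, so $f = \chi_\beta$ and $f\of X = \prod_{i=0}^{n-1}\of{X - \beta^{q^i}}$. More generally, for any $\gamma \in \Fqn$ the characteristic polynomial of multiplication by $\gamma$ factors over the Galois conjugates as $\chi_\gamma\of X = \prod_{i=0}^{n-1}\of{X - \gamma^{q^i}}$; applying this to $\gamma = \beta^k \in \Fqn$ gives $\chi_{\beta^k}\of{X^k} = \prod_{i=0}^{n-1}\of{X^k - \beta^{kq^i}}$. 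The goal is then to show that the right-hand side of the claimed identity equals this product up to the indicated sign.

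Next I would expand the right-hand side. Writing $f\of{\cyclgen{k'}^j X} = \prod_{i=0}^{n-1}\of{\cyclgen{k'}^j X - \beta^{q^i}}$ and interchanging the two products, it suffices to understand, for each fixed conjugate $a = \beta^{q^i}$, the factor $\prod_{j=1}^{k}\of{\cyclgen{k'}^j X - a}$. Here $\cyclgen{k'}$ generates $\cyclgroup{k'}$, whose order $\ell = \vert \cyclgroup{k'}\vert$ is the largest divisor of $k$ coprime to $p = \char\Fq$; in particular $\ell \mid k$ and $k/\ell = p^{\nu_p\of k}$ is a power of $p$. Hence as $j$ runs over $1, \dots, k$ the powers $\cyclgen{k'}^j$ sweep out $\cyclgroup{k'}$ exactly $k/\ell$ times, so $\prod_{j=1}^{k}\of{\cyclgen{k'}^j X - a} = \bigl(\prod_{\zeta \in \cyclgroup{k'}}\of{\zeta X - a}\bigr)^{k/\ell}$.

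The inner product evaluates by the standard factorization of $X^\ell - 1$: pulling out the roots of unity gives $\prod_{\zeta \in \cyclgroup{k'}}\of{\zeta X - a} = (-1)^{\ell+1}\of{X^\ell - a^\ell}$, using that the product of all $\ell$-th roots of unity is $(-1)^{\ell+1}$. Raising to the power $k/\ell$ and invoking the Frobenius identity $(u-v)^{p^{\nu_p\of k}} = u^{p^{\nu_p\of k}} - v^{p^{\nu_p\of k}}$ collapses $\of{X^\ell - a^\ell}^{k/\ell}$ to $X^k - a^k$. Reinstating $a = \beta^{q^i}$ and multiplying over the $n$ conjugates yields $\prod_{j=1}^{k} f\of{\cyclgen{k'}^j X} = (-1)^{n(\ell+1)k/\ell}\,\chi_{\beta^k}\of{X^k}$. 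It then remains to check that the sign equals $(-1)^{n(k+1)}$: in characteristic $2$ both are trivial, while for odd $p$ the exponent $k/\ell$ is odd, whence $k \equiv \ell \pmod 2$ and $(-1)^{n(\ell+1)k/\ell} = (-1)^{n(k+1)}$. Since this sign is its own inverse, rearranging gives the assertion.

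The conceptual steps — the conjugate factorization of characteristic polynomials and the factorization of $X^\ell - a^\ell$ over the $\ell$-th roots of unity — are routine. I expect the main obstacle to be the careful treatment of the inseparable part when $p \mid k$: one must correctly identify the multiplicity $k/\ell$ with which each element of $\cyclgroup{k'}$ is hit, verify that it is a power of $p$ so that the Frobenius collapse $\of{X^\ell - a^\ell}^{k/\ell} = X^k - a^k$ is valid, and then track the resulting sign through to $(-1)^{n(k+1)}$. When $\gcd(q,k)=1$ one has $\ell = k' = k$ and these complications vanish, leaving only the clean separable computation.
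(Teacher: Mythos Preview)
Your proof is correct. The paper does not supply its own proof of this theorem---it is quoted from Daykin \cite{Daykin1965} and used as input for the subsequent constructions---so there is no argument in the paper to compare against.

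A couple of minor observations. First, you correctly allow for the possibility that \(\gcd(q,k')>1\) (which can occur when \(q\) is a proper prime power and \(\nu_p(k)>\nu_p(q)\)); in that case the paper's notational convention for \(\cyclgen{k'}\) is not literally applicable, but your reading of \(\cyclgen{k'}\) as a generator of the cyclic group \(\cyclgroup{k'}\) (of order \(\ell\), the \(p'\)-part of \(k\)) is the only sensible one and makes the identity true. Second, your sign bookkeeping is clean: the key point is that for odd \(p\) the quotient \(k/\ell=p^{\nu_p(k)}\) is odd, forcing \(k\equiv\ell\pmod 2\), while in characteristic \(2\) all signs collapse. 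Both are handled explicitly. The argument is complete as written.
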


\begin{remark}
	The polynomials \(f\of{\cyclgen {k'}^j X}\) for \(1 \leq j \leq k\) are not necessarily polynomials over \(\Fq\) and need not be irreducible. Thus, in general, Theorem \ref{Daykin: Theorem 1} does not describe the factorization of \(\chi_{\beta^k}\of{X^k}\) into irreducible factors over \(\Fq\).
\end{remark}

	\begin{theorem}[{\cite{Daykin1965}}] \label{Daykin: Theorem 2}
		Let \(f\in \FqX\) be a monic irreducible polynomial of degree \(n\) and order \(e\) and let  \(\beta\in \Fqn\) be a root of \(f\). Then for \(k \in \N\) the characteristic polynomial \(\chi_{\beta^k}\in \FqX\) of \(\beta^k\in \Fqn\) over \(\Fq\) satisfies 
		\(\chi_{\beta^k} = \left( m_{\beta^k} \right)^{\frac nm}\),
		where the minimal polynomial \(m_{\beta^k}\) of \(\beta^k\) over \(\Fq\) has order \(\frac e {\gcd(e,k)}\) and degree \(m\), which is the least positive integer for which \(\frac e {\gcd(e,k)}\) divides \(q^m-1\).
	\end{theorem}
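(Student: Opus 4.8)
The plan is to reduce the entire statement to three facts about the single element \(\gamma := \beta^k \in \Fqn\) and its minimal polynomial \(m_\gamma = m_{\beta^k}\): the power relation between \(\chi_\gamma\) and \(m_\gamma\), the order of \(m_\gamma\), and its degree. First I would record the standing assumption hidden in the hypothesis: since \(f\) has a well-defined order \(e\), we have \(f \neq X\), so \(\beta \neq 0\) and hence \(\gamma \neq 0\); moreover, by the definition of the order of \(f\), the root \(\beta\) has multiplicative order \(e\) in \(\Fqn^\times\).

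For the power relation \(\chi_{\beta^k} = \left(m_{\beta^k}\right)^{n/m}\), I would use the standard description of the characteristic polynomial of an element as a product over its Frobenius conjugates: for any \(\alpha \in \Fqn\) the multiplication-by-\(\alpha\) map has \(\chi_\alpha(X) = \prod_{i=0}^{n-1}\left(X - \alpha^{q^i}\right)\). The \(n\) conjugates \(\alpha^{q^i}\) repeat with period \(m := \deg m_\alpha\), since the distinct conjugates of \(\alpha\) over \(\Fq\) form a single Frobenius orbit of size \(m\); grouping them therefore yields \(\chi_\alpha = \left(m_\alpha\right)^{n/m}\). Applying this to \(\alpha = \gamma\) gives the first assertion. (Equivalently, one notes that \(\chi_\gamma\) and \(m_\gamma\) have the same irreducible factors, that \(m_\gamma\) is irreducible, and that the degrees force the exponent \(n/m\).)

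For the order, I would invoke the definition that the order of the irreducible polynomial \(m_\gamma\) equals the multiplicative order of its root \(\gamma\). Since \(\beta\) has order \(e\), the elementary formula for the order of a power in a cyclic group gives \(\ord(\gamma) = \ord\!\left(\beta^k\right) = \frac{e}{\gcd(e,k)}\), which is exactly the claimed order of \(m_{\beta^k}\).

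For the degree, the point is that \(m = \deg m_\gamma = [\Fq(\gamma):\Fq]\), so \(\Fq(\gamma) = \F_{q^m}\). I would then combine two chains of equivalences: on the additive/field side, \(\gamma \in \F_{q^s} \iff \F_{q^m} \subseteq \F_{q^s} \iff m \mid s\); on the multiplicative side, because \(\gamma \neq 0\), membership \(\gamma \in \F_{q^s}\) is the same as \(\gamma \in \F_{q^s}^\times\), which holds \(\iff \ord(\gamma) \mid q^s - 1\). Since these two conditions on \(s\) coincide, the least \(s\) for which \(\ord(\gamma) \mid q^s - 1\) is precisely \(m\), giving that \(m\) is the least positive integer with \(\frac{e}{\gcd(e,k)} \mid q^m - 1\). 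The hard part here — really the only step demanding care — is this last identification: one must verify that the minimal exponent \(s\) for which the order of \(\gamma\) divides \(q^s - 1\) equals the field-extension degree \(m\), i.e.\ that \(\gamma\) generates exactly \(\F_{q^m}\) and lies in no proper subfield. This follows because \(\F_{q^m} = \Fq(\gamma)\) is by construction the smallest field containing \(\Fq\) and \(\gamma\), so the equivalence \(m \mid s \iff \ord(\gamma)\mid q^s-1\) already pins down the minimal \(s\) as \(m\).
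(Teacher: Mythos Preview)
Your argument is correct in all three parts: the relation \(\chi_\gamma = (m_\gamma)^{n/m}\) via grouping Frobenius conjugates, the order formula \(\ord(\beta^k)=e/\gcd(e,k)\) from cyclic-group arithmetic, and the identification of \(m\) as the least \(s\) with \(\ord(\gamma)\mid q^s-1\) via the equivalence \(\gamma\in\F_{q^s}\iff m\mid s\). Note, however, that the paper does not supply its own proof of this theorem: it is quoted from Daykin \cite{Daykin1965} and used as input to the subsequent constructions, so there is no in-paper argument to compare against. Your proof is the standard elementary one and is exactly what one would expect in Daykin's original or in any textbook treatment.
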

Note that  Theorem \ref{Kyureghyan2020: Corollary 3} (i) and (ii) follow directly from Theorems \ref{Daykin: Theorem 1} and \ref{Daykin: Theorem 2}.\\

Theorems \ref{Daykin: Theorem 1} and \ref{Daykin: Theorem 2} suggest the following construction of \(m_{\beta^k}\) from \(m_\beta\).

\begin{constructionad}\label{Construction: New AD}
	Let \(f\in \FqX\) be a monic irreducible polynomial of degree \(n\) and order \(e\) and let \(\beta \in \Fqn\) be a root of \(f\). Given a positive integer \(k\leq e\) define \(k'=\frac k {\gcd(q,k)}\). To construct the minimal polynomial \(m_{\beta^k}\) of \(\beta^k\in \Fqn\) over \(\Fq\), do the following steps:
	\begin{enumerate}[Step 1.]
		\item Compute the product \[(-1)^{n (k+1)} \prod_{j=1}^{k} f\of{\cyclgen{k'}^j X} = \chi_{\beta^k}\of{X^k}.\]\label{Step: Product}
		\item Extract \(\chi_{\beta^k}\) from the composition \(\chi_{\beta^k}\of{X^k}\).
		\item Determine \(m\), the least positive integer for which \(\frac e {\gcd(e,k)}\) divides \(q^m-1\). \label{Step: Exponent}
		\item Find the factor \(m_{\beta^k}\) in the product  \(\chi_{\beta^k}= \left(m_{\beta^k}\right)^\frac nm\). \label{Step Find m_betak}
	\end{enumerate}

\end{constructionad}

\begin{remark}
	\begin{enumerate}[(a)]
		\item Note that  \(\cyclgen{k'}\) is an element of \(\Fq\) if and only if \(k'\mid q-1\). Therefore, the computations of step \ref{Step: Product} in  Construction AD are carried out in a pure extension field of \(\Fq\) if \(k'\nmid q-1\).
		\item 	Construction AD can also be applied without the knowledge of the order \(e\) of the polynomial \(f\). In that case we replace Steps \ref{Step: Exponent} and \ref{Step Find m_betak} with factorizing \(\chi_{\beta^k}\), which will be an unknown power of the minimal polynomial \(m_{\beta^k}\) of \(\beta^k\) over \(\Fq\).
		\item On the other hand, if the order \(e\) of \(f\) is known, it is possible to avoid the computation intensive Step \ref{Step Find m_betak} by selecting \(k\) such that \(n=m\). Then the characteristic and the minimal polynomial of \(\beta^k\) over \(\Fq\) are equal.
		\item Construction KK does not depend on the knowledge of the order of the intial polynomial \(f\). If used iteratively, it can even  give information on the order as we will discuss later. 
	\end{enumerate}
\end{remark}

In this paper we suggest a construction of the minimal polynomial \(m_{\beta^k}\) of \(\beta^k\in \Fqn\) over \(\Fq\) from the minimal polynomial \(f= m_\beta\) for all positive integers \(k\) whose prime factors divide \(q-1\) which avoids the computation intensive Step \ref{Step Find m_betak} of Construction AD. Additionally, in this construction computations are carried out in \(\Fq\) and it does not depend on the knowledge of the order of the initial polynomial \(f\). While Construction KK only works for finite fields of odd size, our construction can also be used in finite fields of characteristic \(2\) which is attractive for applications in computer science. The key observation leading to our construction is that for \(k \mid q-1\) holds
\[m_{\beta^k}\of{X^k} = \prod_{j=1}^{\frac kt} \cyclgen k^{-jn} f \of{\cyclgen k^j X},\]
where  \(t= \max \{m\mid \gcd(n,k): f\of X = g \of {X^m} \text{ for a polynomial } g \in \FqX\}\).

\section{Theoretical background for the new construction}\label{Properties}

In Theorem \ref{Daykin: Theorem 2} the order of the monic irreducible polynomial \(f=m_\beta\) is used  to determine the degree of the minimal polynomial \(m_{\beta^k}\) or, equivalently, the power to which the minimal polynomial of \(\beta^k\) is taken in the characteristic polynomial of \(\beta^k\in \Fqn\) over \(\Fq\). In this section we describe how to determine this exponent without the knowledge of the order of \(f\).

\begin{remark}\label{Remark gcd(q,k)>1}
	If \(\gcd(q,k)>1\), the coefficients of \(m_{\beta^k}\) can easily be derived from the coefficients of \(m_{\beta^{k'}}\) where \(k'=\frac k {\gcd(q,k)}\). Indeed, Theorem \ref{Daykin: Theorem 2} implies that \(\ord(m_{\beta^k}) = \frac e {\gcd(e,k)} = \frac e {\gcd(e,k')} = \ord (m_{\beta^{k'}})\) and therefore \(\deg (m_{\beta^{k'}}) = \deg (m_{\beta^k}) = m\). Suppose that  \(m_{\beta^{k'}} = \sum_{i=0}^m a_i X^i\) and set \(g = \sum_{i=0}^m a_i^{\gcd(q,k)} X^i\in \FqX\). Then 
	\begin{align*}
		g\of{\beta^k}&= \sum_{i=0}^m a_i^{\gcd(q,k)} \left(\beta^k\right)^i = \sum_{i=0}^m a_i^{\gcd(q,k)} \left(\beta^{k'}\right)^{i\cdot \gcd(q,k)}\\
		& = \left(m_{\beta^{k'}}\of{\beta^{k'}} \right)^{\gcd(q,k)} = 0.
	\end{align*}
	Thus, \(\beta^k\) is a root of  \(g\) and since \(\deg(g)=m=\deg(m_{\beta^k})\) the polynomial \(g\) is the minimal polynomial of \(\beta^k\) over \(\Fq\). That is, \(m_{\beta^k} = \sum_{i=0}^m a_i^{\gcd(q,k)} X^i\).
\end{remark}

Using Remark \ref{Remark gcd(q,k)>1}, we can restrict our discussion to the case that \(\gcd(q,k)=1\). Nontheless, note that all results hold also for integers \(k\) such that \(\gcd(q,k)>1\). The main advantage of considering only the case \(\gcd(q,k)=1\) is that there always exist exactly \(k\) distinct \(k\)-th roots of unity in an extension field \(\mathbb E \geq \Fq\) of \(\Fq\).

\begin{theorem}\label{Theorem: chi_beta^k(X) reducible}
	Let \(k \in \N\) with \(\gcd(q,k)=1\). Further, let  \(\beta \in \Fqn\) be a proper element of \(\Fqn\) and \(\chi_{\beta^k}\) be the characteristic polynomial of \(\beta^k\in \Fqn\) over \(\Fq\).\\	
	Then \(\chi_{\beta^k}  = \left( m_{\beta^k} \right)^t\) for a positive integer \(t\in \N\) if and only if  every root of the polynomial \(\chi_{\beta^k}\of{X^k}\) has multiplicity \(t\). That is, the roots of \(\chi_{\beta^k}(X)\) and the roots of \(\chi_{\beta^k}\of{X^k}\) have the same multiplicity \(t\).
\end{theorem}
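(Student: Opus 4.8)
The plan is to reduce the claimed equivalence to two elementary observations and then combine them. The first is that the polynomial identity ``\(\chi_{\beta^k}=(m_{\beta^k})^t\)'' says nothing more than that every root of \(\chi_{\beta^k}(X)\) has multiplicity exactly \(t\). The second is that the substitution \(X\mapsto X^k\) preserves root multiplicities (while multiplying the number of roots by \(k\)), so that the set of multiplicities occurring in \(\chi_{\beta^k}(X)\) and in \(\chi_{\beta^k}(X^k)\) is the same. Together these two facts immediately give the stated ``if and only if''.

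For the first observation I would argue via the squarefree part. The distinct roots of \(\chi_{\beta^k}\) are precisely the conjugates \(\beta^k,\beta^{kq},\beta^{kq^2},\dots\) of \(\beta^k\) over \(\Fq\), which are exactly the roots of the minimal polynomial \(m_{\beta^k}\). Since \(m_{\beta^k}\) is irreducible over a finite field, it is separable, so it is its own squarefree part; hence \(m_{\beta^k}\) is also the squarefree part of \(\chi_{\beta^k}\). As both polynomials are monic, it follows that \(\chi_{\beta^k}=(m_{\beta^k})^t\) holds for a given \(t\) exactly when every root of \(\chi_{\beta^k}(X)\) has multiplicity \(t\). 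I would state this squarefree-part argument explicitly, since it is what lets me pass from the polynomial identity to a statement purely about multiplicities, without ever invoking the order of \(f\).

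For the second observation I would write \(\chi_{\beta^k}(X)=\prod_i (X-\alpha_i)^{\mu_i}\) over the distinct roots \(\alpha_i\) and substitute to obtain \(\chi_{\beta^k}(X^k)=\prod_i (X^k-\alpha_i)^{\mu_i}\). The one point that needs care, and the only place the hypothesis \(\gcd(q,k)=1\) enters, is the behaviour of the factors \(X^k-\alpha_i\): because \(\gcd(q,k)=1\) and each \(\alpha_i\) is nonzero (a proper element \(\beta\) of \(\Fqn\) with \(n\geq 2\) lies outside \(\Fq\), so \(\beta\neq 0\) and hence \(\beta^k\neq 0\); the degenerate case \(\beta=0\) is excluded), each \(X^k-\alpha_i\) is separable, and for \(i\neq j\) the factors \(X^k-\alpha_i\) and \(X^k-\alpha_j\) have disjoint sets of roots. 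Consequently each root \(\alpha_i\) of multiplicity \(\mu_i\) in \(\chi_{\beta^k}(X)\) yields exactly \(k\) distinct roots of \(\chi_{\beta^k}(X^k)\), each again of multiplicity \(\mu_i\), and no further roots occur. Thus the multiset of multiplicities of \(\chi_{\beta^k}(X^k)\) consists of the same values \(\mu_i\) (each now repeated \(k\) times) as that of \(\chi_{\beta^k}(X)\).

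Combining the two observations completes the argument: every root of \(\chi_{\beta^k}(X^k)\) has multiplicity \(t\) if and only if every \(\mu_i\) equals \(t\), which by the first observation is equivalent to \(\chi_{\beta^k}=(m_{\beta^k})^t\). I expect the only real obstacle to be the separability check for \(X^k-\alpha_i\); it is worth isolating this as a short lemma (separability of \(X^k-\alpha\) for \(\alpha\neq 0\) and \(\gcd(q,k)=1\), together with disjointness of the root sets for distinct \(\alpha\)), so that the multiplicity-preserving nature of \(X\mapsto X^k\) becomes transparent and the final equivalence reads off at once.
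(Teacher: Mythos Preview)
Your proof is correct and follows essentially the same approach as the paper: both arguments rest on the separability of each factor \(X^{k}-\alpha_i\) (using \(\gcd(q,k)=1\) and \(\alpha_i\neq 0\)) and on the disjointness of the root sets of \(X^{k}-\alpha_i\) and \(X^{k}-\alpha_j\) for distinct \(\alpha_i,\alpha_j\), from which the equality of root multiplicities in \(\chi_{\beta^k}(X)\) and \(\chi_{\beta^k}(X^{k})\) follows. The paper writes the roots explicitly as \(\zeta_{k}^{\,j}\beta^{q^{i}}\) and checks distinctness by raising to the \(k\)-th power, whereas you keep the roots abstract and phrase the same step as a ``multiplicity-preserving substitution'' lemma; this is a cosmetic repackaging of the same idea.
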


\begin{proof}
	Since \(\chi_{\beta^k} \) is the characteristic polynomial of \(\beta^k\) over \(\Fq\), there exists a positive integer \(t\geq 1\) such that \(\chi_{\beta^k}\of{X^k} = \left(m_{\beta^k}\of{X^k}\right)^t\).
	Furthermore, 
	\(m_{\beta^k}\of{X^{k}} = \prod_{i=0}^{\frac nt -1} \left( X^{k} - \beta^{k\cdot q^{i}}\right)\)
	and for every \(i\) the polynomial \(X^{k}-\left(\beta^{q^{i}}\right)^{k}\) has  \(k\) distinct  roots of the form \(\cyclgen{k}^j\beta^{q^{i}}\) in an extension field of \(\Fq\), where \(1 \leq j \leq k\). Thus, 	
	\(\chi_{\beta^k}\of{X^k} = \prod_{i=0}^{\frac nt -1} \prod_{j=1}^{k} \left( X - \cyclgen{k}^j \beta^{q^{i}}\right)^{t}\). Note that the roots \(\cyclgen{k}^j\beta^{q^{i}}\) of \(\chi_{\beta^k}\of{X^k}\) for \(1 \leq j \leq k\) and \(0 \leq i \leq \frac nt -1\) are distinct.  Indeed, if for \(1 \leq j_1,j_2\leq k\) and \(0 \leq i_1,i_2\leq \frac nt -1\) the two roots 
	\( \cyclgen{k}^{j_1} \beta^{q^{i_1}}\) and \(\cyclgen{k}^{j_2} \beta^{q^{i_2}}\) were equal, we would have 	
	\(\left(\beta^{k}\right)^{q^{i_1}} = \left(\cyclgen{k}^{j_1} \beta^{q^{i_1}}\right)^{k} = \left(\cyclgen{k}^{j_2} \beta^{q^{i_2}}\right)^{k} = \left(\beta^{k}\right)^{q^{i_2}} \)
	and since the elements \(\left(\beta^{k}\right)^{ q^{i}}\) of \(\Fqto{\frac nt}\) are distinct, we have \(i_1 = i_2\) and consequently also \(j_1=j_2\). To complete the proof recall that the roots of irreducible polynomials over finite fields are simple. 
\end{proof}

The roots of the polynomial \(\chi_{\beta^k}\of{X^k}\) lie in an extension field of \(\Fq\). Since we later want to work in \(\Fq\), we state the following immediate consequence of Theorem \ref{Theorem: chi_beta^k(X) reducible}.

\begin{corollary}\label{Corollary: chi_beta^k reducible with irreducible factors}
	Let \(k \in \N\) such that \(\gcd(q,k)=1\). Further, let  \(\beta \in \Fqn\) be a proper element of \(\Fqn\) and \(\chi_{\beta^k}\) be the characteristic polynomial of \(\beta^k\in \Fqn\) over \(\Fq\).\\	
	Then \(\chi_{\beta^k}  = \left( m_{\beta^k} \right)^t\) for a positive integer \(t\) if and only if every irreducible factor of \(\chi_{\beta^k}\of{X^{k}}\) over \(\Fq\) appears with multiplicity \(t\). 
\end{corollary}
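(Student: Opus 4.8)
The plan is to deduce this directly from Theorem \ref{Theorem: chi_beta^k(X) reducible}, whose criterion is phrased in terms of the multiplicities of the roots of \(\chi_{\beta^k}\of{X^k}\) in the algebraic closure of \(\Fq\). All that is needed is to translate such root multiplicities into multiplicities of irreducible factors over \(\Fq\), and the bridge is the standard fact that every irreducible polynomial over a finite field is separable and hence has only simple roots.

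First I would write the factorization of \(\chi_{\beta^k}\of{X^k}\) over \(\Fq\) into pairwise distinct monic irreducible factors, say \(\chi_{\beta^k}\of{X^k} = \prod_{i} p_i^{s_i}\) with \(p_i \in \FqX\) irreducible and \(p_i \neq p_j\) for \(i \neq j\). Since distinct irreducible polynomials over a field share no common root, every root \(\alpha\) of \(\chi_{\beta^k}\of{X^k}\) is a root of exactly one factor \(p_i\). Because \(p_i\) is separable, \(\alpha\) is a simple root of \(p_i\), so the multiplicity of \(\alpha\) as a root of \(\chi_{\beta^k}\of{X^k}\) equals \(s_i\). Hence the multiplicity of every root lying in a given irreducible factor coincides with the multiplicity \(s_i\) of that factor.

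From this correspondence the equivalence is immediate: every root of \(\chi_{\beta^k}\of{X^k}\) has multiplicity \(t\) if and only if \(s_i = t\) for all \(i\), that is, if and only if every irreducible factor of \(\chi_{\beta^k}\of{X^k}\) over \(\Fq\) appears with multiplicity \(t\). Combining this with Theorem \ref{Theorem: chi_beta^k(X) reducible}, which equates the condition on root multiplicities with the identity \(\chi_{\beta^k} = \left(m_{\beta^k}\right)^t\), yields the claim.

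I do not expect any genuine obstacle here, since the statement is explicitly an immediate consequence of the theorem; the only point requiring a little care is the separability argument that lets one pass from the factor multiplicity \(s_i\) to the multiplicity of each individual root. This step relies on the fact that inseparability of a polynomial over a finite field would force a repeated irreducible factor of a proper \(p\)-th power form, which cannot occur for the irreducible \(p_i\) themselves, so each \(p_i\) indeed contributes its roots simply.
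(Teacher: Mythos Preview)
Your proposal is correct and follows exactly the route the paper intends: the paper does not give a proof at all, merely declaring the corollary an ``immediate consequence'' of Theorem~\ref{Theorem: chi_beta^k(X) reducible}, and your argument via separability of irreducible polynomials over finite fields is precisely the standard way to make that immediacy explicit. There is nothing to add.
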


Let \(f\in \FqX\) be a monic irreducible polynomial of degree \(n\) and \(\beta\in \Fqn\) be a root of \(f\). By  Theorem \ref{Daykin: Theorem 1}, we have
\begin{equation}\label{Eq: Factorization chi_beta^k over Fq}
	\chi_{\beta^k}\of{X^k} = (-1)^{(k+1)n} \prod_{j=1}^k f\of{\cyclgen k^j X} = \prod_{j=1}^k \cyclgen{k}^{-jn} f\of{\cyclgen{k}^j X}. 
\end{equation}
If \(k\mid q-1\), then \(\cyclgroup{k}\) lies in \(\Fq\) and for \(1\leq j \leq k\) the polynomials \(\cyclgen{k}^{-jn}f\of{\cyclgen k ^j X}\) are monic polynomials of degree \(n\) over \(\Fq\). 
The element \(\cyclgen k^{-j}\beta\) is a root of \(\cyclgen k^{-jn}f\of{\cyclgen k^j X}\) and since \(\beta\) is a proper element of \(\Fqn\), the element \(\cyclgen k^{-j}\beta\) also is a proper element of \(\Fqn\). Consequently, the polynomial \(\cyclgen k^{-jn}f\of{\cyclgen k ^j X}\) is the minimal polynomial of \(\cyclgen k^{-j}\beta\) over \(\Fq\) and (\ref{Eq: Factorization chi_beta^k over Fq}) yields the factorization of \(\chi_{\beta^k}\of{X^k}\) into monic irreducible factors over \(\Fq\). With Corollary \ref{Corollary: chi_beta^k reducible with irreducible factors} we obtain that the exponent of the minimal polynomial of \(\beta^k\)  over \(\Fq\) in the characteristic polynomial \(\chi_{\beta^k}\) is   equal to the multiplicity of every polynomial \(\cyclgen{k}^{-jn}f\of{\cyclgen k^j X}\) in the factorization (\ref{Eq: Factorization chi_beta^k over Fq}). Thus, in the case that \(k \mid q-1\), we need to determine under which conditions the polynomials of the form \(\cyclgen k^{-jn}f\of{\cyclgen{k}^j X}\) are equal.  For this we need the following easy proposition.

\begin{proposition}\label{Fact: f(X^k)}
	Let \(k,m \in \N\) such that \(\gcd(q,k)=1=\gcd(q,m)\) and \(f \in \FqX\). Then the following statements hold:
 	\begin{enumerate}[(a)]
		\item There exists \(g\in \FqX\) such that \(f(X)=g(X^k)\) if and only if \(f(X)=f(\cyclgen{k}X)\). \label{enum: Prop f(X^k)}
		\item If there exist polynomials \(g,h\in \FqX\) such that \(f=g\of{X^k}=h\of{X^m}\), then there exists a polynomial \(u \in \FqX\) such that \(f\of{X} = u\of{X^{\lcm(k,m)}}\).
	\end{enumerate} 
\end{proposition}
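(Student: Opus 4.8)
The plan is to prove both parts by comparing the coefficients of $f$ against those of its ``rotated'' version $f\of{\cyclgen k X}$, exploiting that $\gcd(q,k)=1$ forces $\cyclgen k$ to have multiplicative order exactly $k$, so that $\cyclgen k^i=1$ if and only if $k\mid i$.

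For part (a), I would write $f=\sum_{i\geq 0} a_i X^i$ with $a_i\in\Fq$, so that $f\of{\cyclgen k X}=\sum_{i\geq 0} a_i\cyclgen k^i X^i$; comparing the coefficient of $X^i$ on both sides reduces the identity $f\of X = f\of{\cyclgen k X}$ to the single relation $a_i\of{1-\cyclgen k^i}=0$ for every $i$. For the forward direction, if $f=g\of{X^k}$ then $a_i=0$ whenever $k\nmid i$, while for the surviving indices $\cyclgen k^i=1$, so the relation holds and $f\of X = f\of{\cyclgen k X}$. For the reverse direction, assuming $f\of X = f\of{\cyclgen k X}$, each index $i$ with $k\nmid i$ satisfies $\cyclgen k^i\neq 1$, whence $a_i=0$; thus only exponents divisible by $k$ occur and $f\of X = g\of{X^k}$ with $g\of Y=\sum_j a_{jk}Y^j$. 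The one point to handle with care is that $\cyclgen k$ generally lies in a proper extension of $\Fq$: the displayed identities are identities over that extension, but the conclusion $a_i=0$ is a statement about $a_i\in\Fq$ and is valid because the ambient ring is a field.

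For part (b), set $\ell=\lcm(k,m)$ and note first that $\gcd(q,\ell)=1$, since every prime divisor of $\ell$ divides $k$ or $m$ and hence is coprime to $q$; in particular $\cyclgen\ell$ is well defined and part (a) applies with $\ell$ in place of $k$. Applying part (a) to the two given representations yields $f\of X = f\of{\cyclgen k X}$ and $f\of X = f\of{\cyclgen m X}$. The key step is then to observe that the set $S=\{\zeta : f\of X = f\of{\zeta X}\}$ is closed under multiplication: if $f\of X = f\of{\zeta_1 X}=f\of{\zeta_2 X}$, applying the first identity to the argument $\zeta_2 X$ gives $f\of{\zeta_1\zeta_2 X}=f\of{\zeta_2 X}=f\of X$. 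Hence $S$ is a subgroup of the group of roots of unity containing $\cyclgen k$ and $\cyclgen m$, so it contains the cyclic group $\cyclgroup\ell=\langle \cyclgen k,\cyclgen m\rangle$ that they generate, and in particular $\cyclgen\ell\in S$. Thus $f\of X = f\of{\cyclgen\ell X}$, and part (a) produces $u\in\FqX$ with $f\of X = u\of{X^\ell}$, as required.

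The argument is essentially routine, and I do not expect a serious obstacle; the only genuinely delicate points are the repeated use of $\gcd(q,\cdot)=1$ to guarantee that the relevant roots of unity have full order (so that $\cyclgen k^i=1\iff k\mid i$) and the closure observation that upgrades the two separate invariances into invariance under all of $\cyclgroup\ell$. I note that part (b) also admits a direct proof bypassing the group argument: $f=g\of{X^k}$ forces every nonzero coefficient of $f$ to sit at an index divisible by $k$, while $f=h\of{X^m}$ forces the same for $m$, so the surviving indices are divisible by $\lcm(k,m)=\ell$, giving $f\of X = u\of{X^\ell}$ at once.
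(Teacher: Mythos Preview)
Your proof is correct. Part (a) is identical to the paper's: coefficient comparison, using that \(\gcd(q,k)=1\) makes \(\cyclgen{k}\) a genuine primitive \(k\)-th root of unity so that \(\cyclgen{k}^i=1\iff k\mid i\). For part (b) the paper takes exactly the direct route you sketch in your final paragraph --- from \(f=g\of{X^k}=h\of{X^m}\) one has \(k\mid i\) and \(m\mid i\) for every index \(i\) with \(a_i\neq 0\), hence \(\lcm(k,m)\mid i\) --- rather than your primary argument via the invariance set \(S=\{\zeta: f\of X = f\of{\zeta X}\}\). Your group-theoretic route is a clean structural repackaging that makes the role of part (a) explicit, while the paper's version is marginally shorter since it bypasses the need to verify that \(S\) is a group and that \(\langle \cyclgen{k},\cyclgen{m}\rangle=\cyclgroup{\lcm(k,m)}\); the two arguments are of course equivalent in content.
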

\begin{proof} 
	\begin{enumerate}[(a)]
		\item If \(f(X)=g(X^k)\), then \(f\of{\cyclgen{k}X} = g \of{\cyclgen{k}^k X^k} = g \of{X^k} = f(X)\). Vice versa, suppose that \(f(X)=f\of{\cyclgen{k}X}\). Then if \(f\of X = \sum_{i=0}^n a_i X^{i}\), we have \(f\of{\cyclgen{k} X} = \sum_{i=0}^n a_i \cyclgen{k}^{i} X^{i}\). Thus, \(\cyclgen{k}^{i} = 1\) for all \(0 \leq i \leq n\) such that \(a_i \neq 0\). Consequently, \(k = \ord\of{\cyclgen{k}} \mid i\) for all \(0 \leq i \leq n\) such that \(a_i \neq 0\).
		\item We know that \(k\mid i\) and \(m\mid i\) for every \(0\leq i \leq n\) such that \(a_i \neq 0\). Then also \(\lcm(k,m)\mid i\) for every such \(i\).
	\end{enumerate}
	
\end{proof}

The following theorem states that it can be seen directly from the non-zero coefficients of the polynomial \(f\), which polynomials of the form \(\cyclgen k^{-jn}f\of{\cyclgen k^j X}\) are equal. 

\begin{theorem} \label{Theorem: k div q-1 and f cyclgen j are equal iff}
	Let \(k \in \N\) such that \(\gcd(k,q)=1\) and let \(f \in \FqX\) be a polynomial of degree \(n\) {such that \(f(0)\neq 0\)}. Set 	\(t= \max \{ m \mid \gcd(n,k): f\of X = g\of{X^m} \text{ for a polynomial } g  \in \FqX\}\). 
	Then for  \(0 \leq j, j'\leq k-1\) the two polynomials \(\cyclgen{k}^{-jn} f \of{\cyclgen{k}^{j}X}\) and \(\cyclgen{k}^{-j'n} f\of{\cyclgen{k}^{j'}X}\) are equal if and only if \(j\equiv j' \mod \frac {k}t\).
\end{theorem}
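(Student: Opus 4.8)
The plan is to convert the polynomial identity into a divisibility condition on the exponents that occur in $f$ and then to read $t$ off from it. Write $f(X)=\sum_{i=0}^n a_iX^i$ and let $S=\{i : a_i\neq 0\}$; by hypothesis $a_0=f(0)\neq 0$ and $a_n\neq 0$, so $0,n\in S$, and this is the fact that ultimately makes the two descriptions of $t$ coincide. Expanding both sides gives $\cyclgen{k}^{-jn}f(\cyclgen{k}^jX)=\sum_{i\in S}a_i\,\cyclgen{k}^{\,j(i-n)}X^i$, and the same with $j'$ in place of $j$. Comparing coefficients, the two polynomials coincide if and only if $\cyclgen{k}^{(j-j')(i-n)}=1$ for all $i\in S$; since $\ord(\cyclgen{k})=k$, writing $\ell=j-j'$ this reads $k\mid \ell(n-i)$ for every $i\in S$.

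Next I would collapse this family of conditions into a single one. With $h=\gcd\{n-i : i\in S\}$ (the index $i=n$ contributes $0$, which is harmless), the relation $h\mid(n-i)$ shows that $k\mid\ell h$ forces every $k\mid\ell(n-i)$, while writing $h$ as an integer combination of the $n-i$ via B\'ezout shows the converse. Hence the two polynomials are equal if and only if $k\mid\ell h$.

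Finally I would identify $h$ with the quantity behind $t$. By Proposition \ref{Fact: f(X^k)}(a), an identity $f(X)=g(X^m)$ with $g\in\FqX$ holds exactly when $m\mid i$ for all $i\in S$, i.e. when $m$ divides $g_0:=\gcd(S)$; as $n\in S$ forces $g_0\mid n$, the largest such $m$ dividing $\gcd(n,k)$ is $t=\gcd(k,g_0)$. The step I expect to need the most care is the equality $g_0=h$: since $n\in S$, any common divisor of the exponents $i\in S$ divides each $n-i$, and since $0\in S$ the difference $n-0=n$ occurs among the $n-i$, so a common divisor of the differences divides $n$ and hence each $i=n-(n-i)$. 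This is exactly where the hypothesis $f(0)\neq 0$ enters; without it the two gcd's can differ. With $g_0=h$ we have $t=\gcd(k,h)$, and the standard coprimality argument turns $k\mid\ell h$ into $\tfrac{k}{\gcd(k,h)}\mid\ell$, that is $\tfrac kt\mid(j-j')$, which is the claimed congruence $j\equiv j'\pmod{k/t}$.
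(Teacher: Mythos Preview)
Your proof is correct and in fact cleaner than the paper's. Both begin the same way: expand, compare coefficients, and reduce equality to the family of conditions \(k\mid \ell(n-i)\) for all \(i\in S\) with \(\ell=j-j'\). From there the arguments diverge. The paper uses only the single condition \(k\mid \ell n\) (coming from \(a_0\neq 0\)) to write \(\ell=v\cdot k/d\) with \(d=\gcd(n,k)\), then shows \(f(X)=f(\zeta_d^{\,v}X)\) and analyses the lattice \(M=\{m\mid d:f(X)=g(X^m)\}\), proving that \(M\) is exactly the set of divisors of \(t\); the congruence \(j\equiv j'\pmod{k/t}\) is then extracted from \(d/\gcd(d,v)\in M\). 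You instead bundle \emph{all} the divisibility conditions at once via \(h=\gcd\{n-i:i\in S\}\), use B\'ezout to reduce to the single condition \(k\mid \ell h\), identify \(h=\gcd(S)\) (this is precisely where \(0,n\in S\), hence \(f(0)\neq 0\), is needed), and observe that \(t=\gcd(k,\gcd(S))\). Your route avoids the auxiliary set \(M\) and the case analysis on its maximal element, replacing them with a one--line gcd identity; it also makes transparent why the hypothesis \(f(0)\neq 0\) is exactly what is needed for the two gcds to coincide. The paper's route has the mild advantage of tying directly into Proposition~\ref{Fact: f(X^k)} and the characterisation \(f(X)=f(\zeta_m X)\), which it reuses elsewhere. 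One small remark: your appeal to Proposition~\ref{Fact: f(X^k)}(a) is really to its proof (the equivalence with ``\(m\mid i\) for all \(i\in S\)''), not to its statement; it would be slightly cleaner to state that implication directly, since it is immediate.
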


\begin{proof} ``\(\Leftarrow\)'': Note that since \(t \mid k\) the element \(\cyclgen{k}^{\frac kt} = \cyclgen{t}\) generates the subgroup \(\cyclgroup{t}\) of the \(t\)-th roots of unity of \(\Fq^{\ast}\). If \(j \equiv j'\mod \frac kt\), then \(j-j' = v\cdot \frac kt \) for an integer \(v\) and we have 
	\begin{align*}
		\cyclgen{k}^{-jn} f\of{\cyclgen{k}^{j}X} &= \cyclgen{k}^{-(j-j')n} \cyclgen k^{-j'n} f\of{\cyclgen{k}^{(j-j')}\cyclgen{k}^{j'}X}		&&= \cyclgen k^{-\frac kt \cdot v \cdot n} \cyclgen k ^{-j'n} f\of{\cyclgen{k}^{\frac kt \cdot v} \cyclgen{k}^{j'}X}\\
		&= \cyclgen k^{-k\cdot v \cdot \frac nt} \cyclgen k^{-j'n} f\of{\cyclgen t^{v} \cyclgen k^{j'}X}
		&&=\cyclgen{k}^{-j'n} f\of{\cyclgen t^{v} \cyclgen k^{j'}X}.
	\end{align*}
	From the definition of \(t\) and Proposition \ref{Fact: f(X^k)} follows that \(f\of{X}=f\of{\cyclgen{t}X}\) and therefore also \(f\of X = f\of{\cyclgen t^v X}\). Thus, \(\cyclgen{k}^{-j'n} f\of{\cyclgen t^{v} \cyclgen k^{j'}X} = \cyclgen{k}^{-j'n} f\of{\cyclgen k^{j'}X}\).\\
	
	``\(\Rightarrow\)'': Suppose that \(\cyclgen k^{-jn} f\of{\cyclgen k ^{j}X} = \cyclgen k^{-j'n} f \of{\cyclgen k^{j'}X}\). Then also 
	
	\begin{equation}\label{eq_=f(X)}
		\begin{aligned}
			\cyclgen{k}& ^{-(j-j')n} f \of{\cyclgen k^{j-j'} X}= \cyclgen k^{j'n}\cdot  \cyclgen k^{-jn} f \of{\cyclgen k^{j} \left(\cyclgen{k}^{-j'} X\right) }\\
			& = \cyclgen k^{j'n}\cdot  \cyclgen k^{-j'n} f \of{\cyclgen k^{j'} \left(\cyclgen{k}^{-j'} X\right) } =  f \of{X}
		\end{aligned}
	\end{equation} Let \(f = \sum_{i=0}^n a_i X^{i}\in \FqX\). Then we have \(\cyclgen{k} ^{-(j-j')n} f \of{\cyclgen k^{j-j'} X} =\)\\
	\(  \sum_{i=0}^n a_i \cyclgen{k}^{-(j-j')(n-i)} X^{i}\). For this polynomial to be equal to \(f\of X\), we need \(k \mid (j-j') (n-i)\) for all \(a_i \neq 0\). Note that \(a_0 = f(0)\neq 0\). Consequently, \(k \mid (j-j')\cdot  n\). Let \(d:=\gcd(n,k)\), then \(\frac kd \mid (j-j')\) and there exists \(v\in \N\) such that \(j-j' = v\cdot \frac kd\). Furthermore, the element \(\cyclgen{k}^{\frac kd}=\cyclgen {d}\) generates the subgroup \(\cyclgroup{d}\) of the \(d\)-th roots of unity of \(\Fq\) and we obtain
	\begin{equation}\label{eq_zeta_d}
		\cyclgen{k} ^{-(j-j')n} f \of{\cyclgen k^{(j-j')} X} = \cyclgen k^{-v \cdot \frac kd \cdot d \cdot \frac nd} f \of{\cyclgen k^{v\cdot \frac kd} X} =  f \of{\cyclgen {d}^{v } X}.
	\end{equation}
	If \(l = \frac d {\gcd(d,v)}\), the element \(\cyclgen{d}^{v}= \cyclgen{l}\) generates the set \(\cyclgroup{l}\) of the \(l\)-th roots of unity over \(\Fq\). Equations (\ref{eq_=f(X)}) and (\ref{eq_zeta_d}) yield that \(f(X)=f(\cyclgen{l} X)\). Note that \(\gcd(d,q)=1\) and with Proposition \ref{Fact: f(X^k)} we obtain that 
	\(M:=\{m\mid d: f(X)= g(X^m), g \in \FqX\}\) is equal to the set \(\{m \mid d: f(X)=f(\cyclgen{m}X)\}\)
	and consequently, \(l\in M\). Let \(t:=\max M\). We will prove that \(M\) is in fact the set of all divisors of \(t\). Note that if \(f(X)=f\of{\cyclgen t X}\), also \(f(X)=f\of{\cyclgen t^{i} X}\) for all \(1 \leq i \leq t\) and any divisor \(m\) of \(t\) satisfies that \(\cyclgen m = \cyclgen t^{\frac tm}\). Thus, all divisors of \(t\) are elements of \(M\). Suppose that there exists an element \(m\in M\) such that \(m\) does not divide \(t\). Then for all \(0 \leq i \leq n\) such that \(a_i\neq 0\), we have \(m\mid i\) and \(t\mid i\). Consequently, \(\lcm(m,t) = t \cdot \frac m {\gcd(m.t)} \mid i\) and since both \(m\) and \(t\) divide \(d\), we obtain \(\lcm(t,m)\in M\). But \(\lcm(t,m)>t\), because \(m\nmid t\). This is a contradiction to the choice of \(t\) and \(M\) is in fact the set of all divisors of \(t\). Consequently, the fact \(l \in M\) is equivalent to \(l \mid t\). Recall that \(l = \frac d {\gcd(d,v)}\) and therefore \( \frac d {\gcd(d , v)}  \mid t\) which is equivalent to \(\frac dt \mid \gcd(d,v)\) and this again is equivalent to \(\frac dt \mid v\). Thus, there exists an integer \(w\) such that \(v = \frac dt \cdot w\). Recall that \(v= \frac {j-j'} {\frac kd}\) and we have \( j-j' = \frac kt \cdot w\). Consequently, \(j\equiv j' \mod \frac kt\).
\end{proof}

As a consequence for \(k\mid q-1\) we have the following result.

\begin{corollary}\label{Corollary: Computation of m_beta^k}
	Let \(k \in \N\) such that \(k \mid q-1\) and let \(f\in \FqX\), \(f\neq X\), be a monic irreducible polynomial of degree \(n\). Further, let \(\beta\in \Fqn\) be a root of \(f\) and \(m_{\beta^k}\in \FqX\) be the minimal polynomial of \(\beta^k\in \Fqn\) over \(\Fq\). Set  \(t= \max \{m\mid \gcd(n,k): f\of X = g \of {X^m} \text{ for a polynomial } g \in \FqX\}\). Then 
	\[m_{\beta^k}\of{X^k} = \prod_{j=1}^{\frac kt} \cyclgen k^{-jn} f \of{\cyclgen k^j X}.\] 
\end{corollary}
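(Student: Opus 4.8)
The plan is to read off both the distinct monic irreducible factors of \(\chi_{\beta^k}\of{X^k}\) and their common multiplicity directly from the factorization (\ref{Eq: Factorization chi_beta^k over Fq}), and then to extract a \(t\)-th root. First I would invoke (\ref{Eq: Factorization chi_beta^k over Fq}), which under the hypothesis \(k \mid q-1\) expresses \(\chi_{\beta^k}\of{X^k}\) as \(\prod_{j=1}^{k} \cyclgen k^{-jn} f\of{\cyclgen k^j X}\). As observed in the discussion preceding Proposition \ref{Fact: f(X^k)}, because \(\cyclgroup k \subseteq \Fq\) each factor \(\cyclgen k^{-jn} f\of{\cyclgen k^j X}\) is a monic degree-\(n\) polynomial over \(\Fq\) which is the minimal polynomial of the proper element \(\cyclgen k^{-j}\beta\), hence irreducible; so this is genuinely a factorization into monic irreducibles over \(\Fq\). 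Note that the hypothesis \(f \neq X\) guarantees \(f(0) \neq 0\), which is exactly what Theorem \ref{Theorem: k div q-1 and f cyclgen j are equal iff} requires.

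Next I would apply Theorem \ref{Theorem: k div q-1 and f cyclgen j are equal iff} to group the factors: letting \(j\) run through the complete residue system \(\{1,\dots,k\}\) modulo \(k\), two factors indexed by \(j\) and \(j'\) coincide exactly when \(j \equiv j' \bmod \frac kt\). Since \(t \mid \gcd(n,k) \mid k\), the index set \(\{1,\dots,k\}\) splits into exactly \(\frac kt\) congruence classes modulo \(\frac kt\), each of size \(t\), with \(j = 1, \dots, \frac kt\) forming a set of representatives. Thus \(\chi_{\beta^k}\of{X^k}\) has precisely \(\frac kt\) distinct monic irreducible factors, namely \(\cyclgen k^{-jn} f\of{\cyclgen k^j X}\) for \(1 \le j \le \frac kt\), and every one of them occurs with multiplicity exactly \(t\):
\[ \chi_{\beta^k}\of{X^k} = \left( \prod_{j=1}^{\frac kt} \cyclgen k^{-jn} f\of{\cyclgen k^j X} \right)^{t}. \]

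With the multiplicity identified, Corollary \ref{Corollary: chi_beta^k reducible with irreducible factors} applies (\(\beta\) is proper because \(f\) is irreducible of degree \(n\)): since every irreducible factor of \(\chi_{\beta^k}\of{X^k}\) appears with multiplicity \(t\), we conclude \(\chi_{\beta^k} = \left(m_{\beta^k}\right)^{t}\), and hence \(\chi_{\beta^k}\of{X^k} = \left(m_{\beta^k}\of{X^k}\right)^{t}\). Comparing this with the grouped product above gives \(\left(m_{\beta^k}\of{X^k}\right)^{t} = \left( \prod_{j=1}^{\frac kt} \cyclgen k^{-jn} f\of{\cyclgen k^j X} \right)^{t}\), an identity of monic polynomials; since \(\FqX\) is a unique factorization domain, equal \(t\)-th powers of monic polynomials have equal bases, and taking \(t\)-th roots yields the claimed formula. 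A quick degree check supports the bookkeeping: \(\deg \chi_{\beta^k} = n\) forces \(\deg m_{\beta^k} = \frac nt\), so \(\deg m_{\beta^k}\of{X^k} = \frac{nk}t\), matching the degree \(n\cdot \frac kt\) of the product.

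The main obstacle is to reconcile the two roles played by the single symbol \(t\): in Theorem \ref{Theorem: k div q-1 and f cyclgen j are equal iff} it is defined as the largest \(m \mid \gcd(n,k)\) with \(f(X) = g(X^m)\), whereas in Corollary \ref{Corollary: chi_beta^k reducible with irreducible factors} it is a priori merely the exponent of \(m_{\beta^k}\) in \(\chi_{\beta^k}\). The counting argument of the second paragraph forces the common multiplicity of the irreducible factors to equal the former \(t\), and Corollary \ref{Corollary: chi_beta^k reducible with irreducible factors} then identifies that multiplicity with the latter exponent, so the two notions of \(t\) coincide; this identification is the crux of the proof.
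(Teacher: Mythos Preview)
Your proof is correct and follows essentially the same approach as the paper: start from the factorization (\ref{Eq: Factorization chi_beta^k over Fq}), use Theorem \ref{Theorem: k div q-1 and f cyclgen j are equal iff} to group the \(k\) monic irreducible factors into \(\frac kt\) distinct ones each with multiplicity \(t\), and then invoke Corollary \ref{Corollary: chi_beta^k reducible with irreducible factors} to identify that multiplicity with the exponent of \(m_{\beta^k}\) in \(\chi_{\beta^k}\). Your write-up is more explicit than the paper's (spelling out why \(f(0)\neq 0\), the UFD argument for extracting the \(t\)-th root, and the reconciliation of the two uses of \(t\)), but the underlying argument is the same.
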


\begin{proof}
	Using Theorem \ref{Theorem: k div q-1 and f cyclgen j are equal iff}  we can rewrite  equation (\ref{Eq: Factorization chi_beta^k over Fq})   and  obtain that the characteristic polynomial of \(\beta^k\in \Fqn\) over \(\Fq\) satisfies 
	\[\chi_{\beta^k}\of{X^k} = \prod_{j=1}^k \cyclgen k^{-jn} f \of{\cyclgen k^j X} = \left( \prod_{j=1}^{\frac kt} \cyclgen k^{-jn} f\of{\cyclgen{k^j} X}\right)^t\]
	and that the polynomials \(\cyclgen k^{-jn} f \of{\cyclgen k^j X}\) for \(1 \leq j \leq \frac kt\) are distinct. Then  Corollary \ref{Corollary: chi_beta^k reducible with irreducible factors} completes the proof.
\end{proof}

Recall that Construction AD constructs the polynomial \(\chi_{\beta^k}\of{X^k}\) with the formula from Theorem \ref{Daykin: Theorem 1} and then extracts the irreducible factor of the polynomial \(\chi_{\beta^k}\) over \(\Fq\) in order to obtain the minimal polynomial \(m_{\beta^k}\) of \(\beta^k\). Using Corollary \ref{Corollary: Computation of m_beta^k}, in our construction we  directly compute the polynomial \(m_{\beta^k}\of{X^k}\) from which the minimal polynomial \(m_{\beta^k}\) can then easily be extracted.

\begin{remark}\label{Remark: m_beta^k for k prime}
	Note that if \(k\mid q-1\) and \(k\) is prime, then \(t>1\) if and only if \(t=k\). Thus, if \(f\of X = g\of{X^{k}}\) for a polynomial \(g \in \FqX\), then the minimal polynomial of \(\beta^k\) over \(\Fq\) satisfies \(m_{\beta^k}\of X = g \of X\). Otherwise, we obtain \(m_{\beta^k}\) by extracting it from the composition 
	\(m_{\beta^k}\of{X^k} = \prod_{j=1}^k \cyclgen k^{-jn} f\of{\cyclgen{k}^j X } = (-1)^{n(k+1)} \prod_{j=1}^k f \of{\cyclgen k^j X}.\)
\end{remark}

\section{The new recursive construction of $m_{\beta^k}$ from $m_\beta$ }

Observe that for \(k, k_1, k_2\in \N\) such that \(k=k_1\cdot k_2\) and a proper element \(\beta\) of \(\Fqn\), we have \(\beta^k = \left( \beta^{k_1}\right)^{k_2}\) and consequently \(m_{\beta^k}\of {X^{k_2}} = m_{\left(\beta^{k_1}\right)^{k_2}} \of{X^{k_2}}\).
Thus, instead of using the direct computation of \(m_{\beta^k}\) from \(m_\beta\), we can apply Corollary \ref{Corollary: Computation of m_beta^k} recursively. Meaning that we first compute the minimal polynomial of \(\beta^{k_1}\) and then with this polynomial compute \(m_{\left(\beta^{k_1}\right)^{k_2}}\of{X^{k_2}}\) from which \(m_{\beta^k} = m_{\left(\beta^{k_1}\right)^{k_2}}\) can easily be extracted. Using the unique prime factorization of an integer \(k\), we can apply Remark \ref{Remark: m_beta^k for k prime} to suggest a construction for all \(k\in \N\) whose prime factors divide \(q-1\).

\begin{construction}\label{New Construction}
	Let \(k\in \N\) such that \(k = k_1 \cdots k_m\) where \(k_1,\ldots, k_m\) are prime factors of \(q-1\) (which are not necessarily distinct). Further, let \(f\in \FqX\) be a monic irreducible polynomial of degree \(n\). Set \(f_0:=f\). 	For \(1\leq i \leq m\) compute the monic irreducible polynomial \(f_i\) in the following way:\\
	If there exists a polynomial \(g\in \FqX\) such that \(f_{i-1}\of X = g \of{X^{k_i}}\), then \(f_i = g\). Otherwise, compute \[ (-1)^{\deg\of{f_{i-1}} \cdot (k_i+1)} \prod_{j=1}^{k_i} f_{i-1}\of{\cyclgen{k_i}^j X} = f_i\of{X^{k_i}}\] and extract \(f_i\) from the composition. Then \(f_m\) is the minimal polynomial of \(\beta^k\in \Fqn\) over \(\Fq\), where \(\beta\in \Fqn\) is a root of \(f\).
\end{construction}

The main differences between Construction \ref{New Construction} and Construction AD are that all computations of Construction \ref{New Construction} are carried out in \(\Fq\) and the construction relies solely on the examination of the non-zero coefficients of the polynomials \(f_i\) and not on the order of the initial polynomial \(f\). Furthermore, while in Construction AD the  minimal polynomial \(m_{\beta^k}\) needs to be extracted from the characteristic polynomial, it is computed directly in Construction \ref{New Construction}. \\

\begin{remark}
	\begin{enumerate}[(a)]
		\item  All polynomials obtained with Construction \ref{New Construction} are of the same degree \(n\) as the initial polynomial \(f\), if we select integers \(k\) such that \(\gcd(n,k)=1\) or such that  the order \(\frac e {\gcd(e,k)}\) of the minimal polynomial of \(\beta^k\) does not divide \(q^{\frac nt}-1\) for any divisor \(t\) of \(n\), whose prime factors divide \(\gcd(n,k)\).
		\item If there exists a polynomial \(g\in \FqX\) such that \(f=g\of{X^{t}}\) for a prime divisor \(t\) of \(k\), then the minimal polynomial of \(\beta^k\) will be of lower degree. Observe that in this case the polynomial \(f\of{X+a}\) for any element \(a \in \Fq\backslash\{0\}\) will not be a composition with \(X^{m}\) for any positive integer \(m>1\) and could be used instead of \(f\). This fact was proved in \cite{Kyureghyan2020} for \(t=2\). For the convenience of the reader, we include the generalized proof here.
		
		\begin{proof}
			If \(f\of X = \sum_{i=0}^n b_i X^{i}= g\of{X^t}\) for \(t>1\), then \(b_{n-1}=0\) and since \(f\) is monic, we have \(b_n = 1\). Furthermore, 
			\begin{align*}
				f\of{X+a} &=  (X+a)^n + \underbrace{\sum_{i=0}^{n-2} b_i\ (X+a)^{i}}_{\deg(\ldots)< n-1}\\
				&= \sum_{j=0}^n \binom nj a^j X^{n-j} + \sum_{i=0}^{n-2} b_i (X+a)^{i} \\
				&= X^n + na X^{n-1} + \sum_{j=2}^n \binom nj a^j X^{n-j} + \sum_{i=0}^{n-2} b_i (X+a)^{i}.
			\end{align*}
			Since \(\gcd(n,q)=1\), \(\char\of{\Fq}\) does not divide \(n\) from which follows that \(na \neq 0\) and there cannot exist any positive integer \(m>1\) such that \(f\of{X+a}=h\of{X^m}\) for a polynomial \(h \in \FqX\).
		\end{proof}
		
	\end{enumerate}  
\end{remark}

	In \cite{Albert1956} Albert defines a ``cubing transformation", which is  an iterated application of Construction AD for \(k=3\). He notices that if the order \(e\) of the initial polynomial  and \(3\) are coprime, its behaviour is ``periodic''. That is, after a certain amount of iterations it will yield the initial polynomial again. In \cite{Kyureghyan2020} a similar construction for \(k=2\), the repeated application of Construction KK, is presented, which does not need the knowledge of the order \(e\) of the initial polynomial but can even be used to gain information on \(e\). Our results allow to generalize the construction from \cite{Kyureghyan2020} for primes \(k\) satisfying \(k\mid q-1\) by applying Construction \ref{New Construction} iteratively.  
	
	\begin{construction}\label{Construction: k-adic valuation of order}
		Let \(k\) be a prime factor of \(q-1\) and \(f\in \FqX\) a monic irreducible polynomial of degree \(n\). Further let \( w =\nu_k\of{q^n-1} \) be the \(k\)-adic valuation of \(q^n-1\). Set \(f_0:= f\). For \(i\geq 1\) compute the monic irreducible polynomial \(f_i\) in the following way:\\
		If there exists a polynomial \(g\in \FqX\) such that \(f_{i-1}\of{X} = g\of{X^k}\), then \(f_i =g\). Otherwise, compute
		\[(-1)^{\deg(f_{i-1})\cdot (k+1)} \prod_{j=1}^k f_{i-1}\of{\cyclgen k^j X}= f_i\of{X^k}\]
		and extract \(f_i\) from the composition. If \(f_i = f_l\) for an integer \(l\) such that \(0 \leq l \leq w\) and \(l<i\), then stop. 
	\end{construction}

With the notation from Construction \ref{Construction: k-adic valuation of order}, suppose that the construction terminates for the polynomial \(f_{l+s}\) which is equal to \(f_l\), for integers \(s\geq 1\) and  \(0 \leq l \leq \nu_k(q^n-1)\). Then we call the sequence 
\[(f_0, f_1, \ldots, f_{l-1})\]
the \textit{tail} of the construction and the sequence 
\[(f_l, \ldots, f_{l+s-1})\]
the \textit{orbit}. Note that the construction would yield the polynomials of the orbit repeatedly if we continued to iterate through the integers \(i\geq l+s\). Observe that the length of the tail is \(l\) and the length of the orbit \(s\).

\begin{corollary}\label{Corollary: Order of f after Construction 2}
With the notation from Construction \ref{Construction: k-adic valuation of order}, we suppose that Construction \ref{Construction: k-adic valuation of order} terminated after a tail of length \(l\) and an orbit of length \(s\).\\
	
	Then \(\ord\of{f} = k^l\cdot r\) and \(r\) must satisfy
	\begin{enumerate}[(I)]
		\item \(\gcd(k,r)=1\), \label{enum: nu_k of Corollary: ord(f) after constr2}
		\item \(s = \frac {\ord_r(k)} d\) for a divisor \(d\) of \(\deg\of{f_l}\), \label{enum: s of Corollary: ord(f) after constr2}
		\item Furthermore, for an integer \(0 \leq j \leq \deg(f_l)-1\), \(d\) must satisfy \(\ord_r\of{q^j}=d\)  and \(k^{s} \equiv q^j \mod r\). \label{enum: d of Corollary: ord(f) after constr2}
	\end{enumerate}
\end{corollary}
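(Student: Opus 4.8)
The plan is to identify each polynomial $f_i$ produced by Construction~\ref{Construction: k-adic valuation of order} with the minimal polynomial $m_{\beta^{k^i}}$ of $\beta^{k^i}$ over $\Fq$, and then to translate the dynamics of the map $f_{i-1}\mapsto f_i$ into the cyclic group $(\Z/r\Z)^{\ast}$. First I would argue by induction that $f_i = m_{\beta^{k^i}}$ for all $i\ge 0$: the base case is $f_0 = f = m_\beta$, and for the inductive step, writing $\gamma$ for a root of $f_{i-1}=m_{\beta^{k^{i-1}}}$, both branches of the construction compute exactly $m_{\gamma^k}=m_{\beta^{k^i}}$ by Remark~\ref{Remark: m_beta^k for k prime} (the $g$-branch handles the case $t=k$, the product-branch the case $t=1$, and $t\in\{1,k\}$ since $k$ is prime). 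With this identification, Theorem~\ref{Daykin: Theorem 2} gives $\ord(f_i)=e/\gcd(e,k^i)$ where $e=\ord(f)$. Writing $e=k^{a}r$ with $\gcd(k,r)=1$ and $a=\nu_k(e)$, this becomes $\ord(f_i)=k^{\,a-\min(a,i)}r$.

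Next I would pin down the tail length as $l=a=\nu_k(e)$. The orders $\ord(f_0),\dots,\ord(f_a)$ equal $e,k^{a-1}r,\dots,kr,r$, which are pairwise distinct, so $f_0,\dots,f_{a-1}$ are distinct and all differ from every $f_i$ with $i\ge a$, because for $0\le j<a$ the order $k^{a-j}r$ of $f_j$ strictly exceeds $r$. Hence no repetition can occur before index $a$. For $i\ge a$ all $f_i$ have order $r$ and lie in the finite set $S$ of monic irreducible polynomials over $\Fq$ of order $r$; since $\gcd(k,r)=1$, the map $x\mapsto x^{k}$ is a bijection on the elements of order $r$ that commutes with the Frobenius $x\mapsto x^q$, so it induces a bijection $\phi\colon S\to S$ on conjugacy classes, i.e.\ on minimal polynomials, with $f_{i+1}=\phi(f_i)$ for $i\ge a$. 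A forward orbit of a bijection on a finite set is purely periodic, so the first recurrence is $f_{a+s}=f_a$; therefore $l=a$, which gives $\ord(f)=k^{l}r$ with $\gcd(k,r)=1$, the main formula together with~(\ref{enum: nu_k of Corollary: ord(f) after constr2}). The bound $l\le w$ is automatic, since $e\mid q^{n}-1$ forces $\nu_k(e)\le\nu_k(q^{n}-1)=w$, matching the stopping rule of the construction.

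Finally I would compute the orbit length. Put $\gamma=\beta^{k^{l}}$, so $\ord(\gamma)=r$ and $D:=\deg(f_l)=\ord_r(q)$, the conjugates of $\gamma$ over $\Fq$ being $\gamma,\gamma^{q},\dots,\gamma^{q^{D-1}}$. Then $f_{l+s}=f_l$ means $m_{\gamma^{k^{s}}}=m_\gamma$, i.e.\ $\gamma^{k^{s}}=\gamma^{q^{j}}$ for some $0\le j\le D-1$, which, as $\ord(\gamma)=r$, is equivalent to $k^{s}\equiv q^{j}\pmod r$. Thus $s$ is the least positive integer with $k^{s}\in\langle q\rangle\le(\Z/r\Z)^{\ast}$. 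The set $\{\,s'\in\Z : k^{s'}\in\langle q\rangle\,\}$ is a subgroup of $\Z$ containing $\ord_r(k)$ (since $k^{\ord_r(k)}=1\in\langle q\rangle$), so $s\mid\ord_r(k)$. Setting $d:=\ord_r(q^{j})$ and using $k^{s}\equiv q^{j}$ gives $d=\ord_r(k^{s})=\ord_r(k)/s$, hence $s=\ord_r(k)/d$, and $d=\ord_r(q^{j})\mid\ord_r(q)=\deg(f_l)$; this yields~(\ref{enum: s of Corollary: ord(f) after constr2}) and~(\ref{enum: d of Corollary: ord(f) after constr2}).

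The step I expect to be the main obstacle is the rigorous proof that the tail length is exactly $a=\nu_k(e)$: I must show that the periodic part starts precisely at index $a$, and this rests on the bijectivity of the $k$-th power map on the order-$r$ elements (which is exactly where $\gcd(k,r)=1$, equivalently $k\mid q-1$, is indispensable) and on its clean descent to a permutation of the finite set of order-$r$ minimal polynomials over $\Fq$. Everything after that is bookkeeping in the cyclic group $(\Z/r\Z)^{\ast}$.
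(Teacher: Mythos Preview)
Your proof is correct and follows essentially the same route as the paper's: identify \(f_i=m_{\beta^{k^i}}\), use Theorem~\ref{Daykin: Theorem 2} to compute \(\ord(f_i)=k^{a-\min(a,i)}r\) with \(e=k^{a}r\), \(\gcd(k,r)=1\), and then analyse the recurrence in \((\Z/r\Z)^{\ast}\) via the condition \(k^{s}\equiv q^{j}\pmod r\). The one noteworthy difference is your argument that the periodic part begins exactly at index \(a\): you observe that \(x\mapsto x^{k}\) is a bijection on the set of elements of order \(r\) (since \(\gcd(k,r)=1\)), commutes with Frobenius, and therefore descends to a permutation of the finite set of order-\(r\) minimal polynomials, so the forward orbit from \(f_a\) is purely periodic. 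The paper instead shows directly that \(f_v\) recurs (via \(k^{\ord_r(k)}\equiv 1\)) and that no earlier polynomial can, arguing with the subgroup \(\langle k\rangle\cap\langle q\rangle\le(\Z/r\Z)^{\ast}\). Your bijectivity argument is a clean way to rule out a pre-period beyond \(a\); both approaches yield the same conclusions \(l=a\), \(s=\ord_r(k)/d\), \(d=\ord_r(q^{j})\mid\deg(f_l)\). One small quibble: your parenthetical ``equivalently \(k\mid q-1\)'' in the last paragraph is not an equivalence---\(\gcd(k,r)=1\) holds by the definition of \(r\), independently of \(k\mid q-1\).
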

	
	\begin{proof}
		Let \(\beta\in \Fqn\) be a root of \(f\), that is, \(f=m_\beta\) is the minimal polynomial of \(\beta\) over \(\Fq\). Then with Construction \ref{New Construction} we know that \(f_i = m_{\beta^{k^i}}\) for every \(i\geq 0\). Further, let \(\ord(f)=e\) and \(e=k^v\cdot r\) with \(\gcd(k,r)=1\). Then with Theorem \ref{Daykin: Theorem 2} the minimal polynomial of \(\beta^{k^i}\), that is, the polynomial \(f_i\), has order
		\begin{equation}\label{Eq: Ord(f_i)}
			\ord(f_i)= \begin{cases}
				\frac e {k^i}= k^{v-i}\cdot r & \text{for } 0 \leq i \leq v, \\
				r  & \text{for } i \geq v. 
			\end{cases}
		\end{equation}
		Since the order of the polynomials \((f_0, f_1, \ldots, f_{v-1})\) strictly decreases, these polynomials cannot appear twice in the sequence \((f_i)_{i\geq 0}\). Note that \(v\leq w = \nu_k(q^n-1)\). Thus, the polynomial \(f_v\), which is the first polynomial of order \(r\) of the sequence \((f_i)_{i\geq 0}\), is an element of the sequence \((f_0, f_1, \ldots, f_w)\). 
		
		 We need to examine \(\Z_r^*\), the multiplicative group modulo \(r\), to see that \(f_v\)  is the first polynomial to appear twice in the sequence \((f_i)_{i\geq 0}\) and therefore \(v=l\).	The subgroup \(\langle k \rangle \) of \(\Z_r^*\) generated by \(k\) has order \(\ord_r(k)\), which is the multiplicative order of \(k\) modulo \(r\). This implies that \(\beta^{k^{v+\ord_r(k)}} =\beta^{k^v}\) and obviously the minimal polynomials of \(\beta^{k^v}\) and \(\beta^{k^{v+\ord_r(k)}}\) over \(\Fq\)  are equal. Thus, \(f_v = f_{v+\ord_r(k)}\) and we have shown that \(f_v\) does appear again in the sequence.
		
		However, the length \(s\) of the orbit is not always equal to \(\ord_r(k)\). The polynomials \(f_{i_1}\) and \(f_{i_2}\) are equal if and only if \(\beta^{k^{i_1}}\) and \(\beta^{k^{i_2}}\) are \(\Fq\)-conjugates. Thus, it is possible that there exists a positive integer \(u\) smaller  than \(\ord_r(k)\) such that \(\beta^{k^{v+u}}\) is an \(\Fq\)-conjugate of \(\beta^{k^v}\) and the minimal polynomial \(f_{v+u}= m_{\beta^{k^{v+u}} }\) also is equal to \(f_v = m_{\beta^{k^v}}\). To account for this, we choose \(u \in \N\) to be the smallest positive integer that satisfies
		\begin{equation}\label{Eq: select u }
			\langle k \rangle \cap \langle q\rangle = \langle k^u\rangle = \langle q^j \rangle \leq  \Z_r^{\ast} \quad \text{ for an integer } 0 \leq j \leq \deg\of{f_v}-1.
		\end{equation} 
		Note that since \(\langle k^{\ord_r(k)}\rangle = \langle q^0\rangle\) such an integer \(u\) exists and satisfies \(u\leq \ord_r(k)\). Then \(\beta^{k^{v+u}} = \left(\beta^{k^{v}}\right)^{q^j}\) and \(f_{v+u} = f_v\). Moreover, the minimal polynomials of \(\beta^{k^{v+i}} \) for \(0 \leq i \leq u-1\) are distinct because we selected \(u\) to be the smallest positive integer to satisfy (\ref{Eq: select u }). Consequently, \(v = l\), which shows that (\ref{enum: nu_k of Corollary: ord(f) after constr2}) holds, and the length \(s\) of the orbit equals \(u\). 
		
		Set \(d:= \vert \langle q^j \rangle \vert = \vert \langle k^s\rangle \vert\) which is a divisor of \(\deg(f_l)\), since \(\langle q^j\rangle \leq \langle q\rangle\) and \(\vert \langle q \rangle \vert = \deg(f_l)\). Then because of \(\langle k^s\rangle\) being a subgroup of \(\langle k \rangle\), we have \(s = \frac {\vert \langle k \rangle\vert}{\vert \langle k^s\rangle \vert}= \frac {\ord_r(k)} d\) which shows that (\ref{enum: s of Corollary: ord(f) after constr2}) holds. (\ref{enum: d of Corollary: ord(f) after constr2}) follows directly from equation (\ref{Eq: Ord(f_i)}) and our definition of \(d\).
	\end{proof}

	 Note that with equation (\ref{Eq: Ord(f_i)}) in the proof of Corollary \ref{Corollary: Order of f after Construction 2}  the polynomials \(f_i\) for \(0 \leq i \leq l-1\) of the tail of Construction \ref{Construction: k-adic valuation of order}  have order \(k^{l-i}\cdot r\) and all polynomials of the orbit have order \(r\).\\
	 
	 If \(p_1,\ldots, p_m\) are the distinct prime factors of \(q-1\), and \(\ord \of{f} = e = p_1^{v_1} \cdots p_m^{v_m}\cdot r\) with \(\gcd(q,r)=1\) and  \(v_1\geq0, \ldots, v_m\geq 0\). Then Construction \ref{Construction: k-adic valuation of order} allows us to determine the \(p_i\)-adic valuations \(v_1,\ldots,v_m\) of the order of \(f\). Additionally, Corollary \ref{Corollary: Order of f after Construction 2} (\ref{enum: s of Corollary: ord(f) after constr2}) and (\ref{enum: d of Corollary: ord(f) after constr2}) give further conditions on the factor \(r\). In most of our computations the conditions on the factor \(r\) were so restrictive that Construction \ref{Construction: k-adic valuation of order} yielded the exact order \(e\) of \(f\).\\

\begin{remark}
In the original version of \cite{Kyureghyan2020}, the number of distinct polynomials produced by {\cite[Construction 1]{Kyureghyan2020}},  is given as \(\ord_r(2)\) where \(\ord(f) = 2^v  r\) with \(v\geq 0\) and \(r \geq 1\) odd. As we can see from Corollary \ref{Corollary: Order of f after Construction 2}, this number is false, since the authors did not take into consideration that the construction could also yield the minimal polynomials of 
\(\Fq\)-conjugates over \(\Fq\). Similarly, in {\cite[Remark 1]{Kyureghyan2020}} the information about the order of the initial polynomial \(C_0\of X\) obtained by the construction should be changed to: \(2^lt\) where \(t\) is an odd divisor of \(q^n-1\) and \(k-l = \frac {\ord_t(2)} d\) for a divisor \(d\) of \(n\). 	
\end{remark}

\section{Implementation of the construction}

In this section we discuss which polynomials can be obtained from a given initial polynomial \(f\)  with Construction \ref{New Construction}  and how to select the integers \(k\) for which we apply the construction. All discussions in this section are about this fixed polynomial \(f\). Suppose that \(f\) is of degree \(n\), has order \(e\) and \(\beta\in \Fqn\) is a root of \(f\). Then \(\beta\) has multiplicative order \(e\) and  the subgroup \(\langle \beta \rangle = \{\beta^k: 0 \leq k \leq e-1\}\) of \(\Fqn^\ast\) contains all elements of \(\Fqn\) with multiplicative order dividing \(e\). Consequently, the set of all polynomials of the form \(m_{\beta^k}\) for \(k\geq 0\) is in fact
\(\{m_{\beta^k} : 0 \leq k \leq e-1\}\)
and contains all monic irreducible polynomials over \(\Fq\) whose order divides \(e\).\\

Let \(p_1, \ldots, p_m\) be the distinct prime factors of \(q-1\). Then we can apply Construction \ref{New Construction} for any integer \(k\) that is an element of the set 
\[\mathcal A := \{ p_1^{i_1}\cdots p_m^{i_m}: i_1, \ldots, i_m \geq 0\}.\]
Since the element \(\beta\) has multiplicative order \(e\), Construction \ref{New Construction} yields the minimal polynomial of \(\beta^{k\pmod e}\) over \(\Fq\). Thus, the set of polynomials that we can construct with the integers in \(\mathcal A\) is 
\[\mathcal M := \{m_{\beta^{k\pmod e}}: k = p_1^{i_1}\ldots p_m^{i_m}, i_1,\ldots, i_m\geq 0 \}.\]
However, we would like to emphasize that the construction should not be restricted to the elements of \(\mathcal A\) which are smaller than \(e\), here denoted by \(\mathcal A_{< e}\). An integer \(k\in \mathcal A\), \(k\geq e\), can yield a polynomial that cannot be constructed by choosing all elements of \(\mathcal A_{<e}\). This is the case if its representative \(k \pmod e\) in \(\Z_e\) is not an element of \(\mathcal A\) as can be seen from the following example:

\begin{example}
	Let \(\F_8 = \F(a)\) where \(a\) is a root of the monic irreducible polynomial \(X^3+X+1\in \F_2[X]\). We consider the primitive monic irreducible polynomial  \(f = X^5 + aX^4 + X^3 + aX^2 + (a^2 + a)X + a^2\in \F_8[X]\)  of order \(e=32.767 = 7 \cdot 31 \cdot 151\). Since \(8-1=7\), we can apply Construction \ref{New Construction} for all elements of \(\mathcal A = \{7^i:  i\geq 0\}\). Note that we can use the notation of Construction \ref{Construction: k-adic valuation of order} and say that the construction yields a tail of length \(1\) and an orbit of length \(150\). By this we mean that the polynomials \(m_{\beta^{7}}\) and \(m_{\beta^{7^{151}}}\) are equal, where \(\beta\) is a root of \(f\).
	
	The smallest positive integer \(i\) such that \(7^i\) is greater than or equal to \(e\) is \(6\). In fact, \(7^6 \pmod e  = 117.649 \pmod e = 19.348 = 2^2 \cdot 7 \cdot 691 \notin \mathcal A\). Thus, if we had restricted ourselves to \(\mathcal A_{<e}\), we would only have found 5 of the \(151\) possible polynomials.
\end{example}

	The number of polynomials that we can construct with Construction \ref{New Construction}, which is the size of \(\mathcal M\), obviously depends on the size of \(\mathcal A\) considered in \(\Z_e\):
	\[\mathcal A_{\mod e} = \{p_1^{i_1}\cdots p_m^{i_m} \pmod e: i_1,\ldots, i_m \geq 0\}.\]
	Note that in general \(\vert \mathcal M\vert\) is smaller than the size of \(\mathcal A_{\mod e}\), because  in \(\mathcal A_{\mod e}\) exponents can belong to \(\Fq\)-conjugates which then yield the same polynomial multiple times.  \\
	
	 We believe that it is not possible to give a closed formula for \(\vert \mathcal M\vert \) in general since  computing \(\vert \mathcal A_{\mod e}\vert\) is difficult. Indeed, it is related to determining the order of some prime numbers in \(\Z_r^\ast\).  In order to see this, suppose that \(e= p_1^{v_1}\ldots p_m^{v_m} \cdot r \) with \(\gcd(q-1,r)=1\) and \(v_1,\ldots, v_m \geq 0\). Then by  the Chinese Remainder Theorem the ring \(\Z_e\) is isomorphic to  \(\Z_{p_1^{v_1}}\times \ldots \times \Z_{p_m^{v_m}} \times  \Z_r\). To determine \(\vert \mathcal A _{\mod e}\vert\), in particular, we need to calculate the size of the multiplicative subgroup \(\langle p_1, \ldots, p_m\rangle  \) in \(\Z_r^\ast\).\\

The behaviour of Construction \ref{Construction: k-adic valuation of order}  allows us to discuss the selection of the integers \(k=p_1^{i_1}\cdots p_m^{i_m}\), \(i_1\geq 0, \ldots, i_m\geq 0\), for Construction \ref{New Construction} so that the number of multiple constructions of the same polynomial is reduced. First, we can obtain a naive upper bound on the exponents \(i_1,\ldots, i_m\) by computing Construction \ref{Construction: k-adic valuation of order} separately for every prime integer \(p_j\), \(1 \leq j \leq m\). Suppose then that the tail has a length of \(v_j\) and the orbit a length of \(s_j\), which is a divisor  of the multiplicative order \(\ord_{\nicefrac e {p_j^{v_j}}}\of{p_j}\). We set \(i_j \leq v_j + s_j\). We would like to note that if the order \(e\) of the initial polynomial \(f=m_\beta\) is known, the values \(v_j\) and \(s_j\) can be determined directly with Corollary \ref{Corollary: Order of f after Construction 2}. \\

 In order to eliminate the remaining duplicates, we suggest the following procedure: We select an integer \(k = p_1^{i_1}\cdots p_{m-1}^{i_{m-1}}\) with \(i_j\leq v_j+s_j\) for every \(1 \leq j \leq m-1\) and compute \(m_{\beta^k}\). 
 Then we construct the polynomials \(m_{\beta^{k\cdot p_m^i}}\) by applying Construction \ref{New Construction} for \(p_m\) repeatedly. With this we obtain  a tail \((m_{\beta^{k}}, \ldots, m_{\beta^{k\cdot p_m^{v_m-1}}})\) and an orbit \((m_{\beta^{k\cdot p_m^{v_m}}}, \ldots, m_{\beta^{k\cdot p_m^{v_m+(s-1)}}})\). Note that the length \(s\) of the orbit depends on \(k\).  \\
 
 Two integers \(k_1\) and \(k_2\) have either the same or a distinct tail. This will happen if and only if \(k_1 \equiv k_2\cdot q^j \mod e\) for an integer \(0 \leq j \leq n-1\).  Clearly if the tail is the same, the orbits coincide too. Thus, if the first tail polynomial is equal, the computation can be stopped.  The polynomials of the orbits of two different integers are also either distinct or equal. Equal orbits can also occur for integers with distinct tails. In this case the orbit polynomials appear in a shifted order. It is easy to see that any other integer of the form \( k_1 \cdot \left( \frac {k_2}{k_1}\right)^l\) with \(l \geq 0\) will yield the same orbit. For such integers we compute only the tail. 
 
 \begin{example}
 	As we have seen before, the number of constructed polynomials only depends on the order of the initial polynomial.  As an example for our computations we consider the polynomials \begin{align*}
 		f_1 &= X^8+X^5+X^3+X^2+a,\\
 		f_2 &= X^9+(a^2 +  a)X^8+(a ^3 + a^2)X^7+ aX^6+X^5+ (a^3 + a^2 + a)X^4\\
 		& \quad +( a^2 + a + 1)X^3+ a^2X^2+ a^3X+ a^3 + a^2 + a
 	\end{align*}  over 
 	\(\F_{16} = \F(a)\), where \(a\)  is a root of the monic irreducible polynomial \(X^4+X+1\) over \(\F_2\). \\ 
 	
 	The polynomial \(f_1\) is primitive and has order \(4.294.967.295=3 \cdot 5 \cdot 17 \cdot 257 \cdot 65537\). Construction \ref{New Construction} with  \(f_1\) as initial polynomial yields \(1.114.113\) monic irreducible polynomials of degree \(8\). Computing \(m_{\beta^k}\) for values of \(k\) of the form \(3^j\), \(j\geq 0\), and then applying the construction repeatedly for \(5\), there are \(33\) orbits of 32.768 polynomials each. The orbit for \(k=1\) contains 32.768 of the 67.108.864 monic irreducible polynomials of order \(3 \cdot 17 \cdot 257 \cdot 65537\) over \(\F_{16}\) and the other 32 orbits for \(k=3^j\), \(1\leq j \leq 32\), yield 1.048.576 of the 33.554.432 monic irreducible polynomials of order \(17 \cdot 257 \cdot 65537\) over \(\F_{16}\).  \(f_1\) has 5 non-zero coefficients and yields a weight distribution  of \(4^65^{384}6^{7225}7^{65997}8^{331084}9^{709417}\), which means that there exist 6 polynomials with smaller weight and 384 polynomials with the same weight. Hence, from these we could try to choose polynomials with other required properties that our initial polynomial might lack.\\
 	
 	   The polynomial \(f_2\) has order  \(68.719.476.735=3^3 \cdot 5 \cdot 7 \cdot 13 \cdot 19 \cdot 37 \cdot 73 \cdot109\) and yields \(4644\) monic irreducible polynomials over \(\F_{16}\) of degree \(9\) with the weight distribution \(6^{2}7^{47}8^{373}9^{1401}10^{2821}\). Even though the number of constructed polynomials is not very large, we could find polynomials of weight \(6\), \(7\) and \(8\). Considering the orbits for repeated application of Construction \ref{New Construction} for \(5\) with starting polynomials \(m_{\beta^k}\) with \(k=3^j\), \(j\geq 0\), there are 21 orbits of 216 polynomials each and the construction yields 3.888 of the 40.310.784 polynomials of order \( 509.033.161 = 7 \cdot 13 \cdot 19 \cdot 37 \cdot 73 \cdot 109\).\\

An interesting class of polynomials are the so-called \textit{normal polynomials} or \textit{\(N\)-polynomials} (see \cite{ Gathen1990, Gao1993, Kyuregyan2004, Meyn1995, Semaev1989}). A monic irreducible polynomial of degree \(n\) with a root \(\alpha\) is called \textit{normal}  if its roots \(\alpha, \alpha^q \ldots, \alpha^{q^{n-1}}\) are linearly independent over \(\Fq\) or, equivalently, if the degree of the greatest common divisor of the polynomials \(g_\alpha = \alpha X^{n-1}+ \alpha^q X^{n-2}+ \ldots+ \alpha^{q^{n-2}} X + \alpha^{q^{n-1}}\) and \(X^n-1\) over \(\Fqn\) is \(0\). This concept has been extended in \cite{Huczynska2013} to \textit{\(k\)-normal} polynomials which satisfy that the greatest common divisor of the two polynomials \(g_\alpha\) and \(X^n-1\) has degree \(k\).    Tables \ref{tab1} and \ref{tab2} show that Construction \ref{New Construction} also yields a large number of \(k\)-normal polynomials for small values of \(k\) which could be used for  respective applications. Since the number of \(k\)-polynomials decreases with \(k\) increasing, this distribution of \(k\)-normality is to be expected (see \cite{Huczynska2013}).
\begin{table}[h]
	\begin{center}
		\begin{minipage}{290pt} 
			\caption{Weight and \(k\)-normality distribution for \(f_1\)}\label{tab1}%
			\begin{tabular}{@{}llllll@{}}
				\toprule
				Weight & Total  & 0-normal & 1-normal & 2-normal & 3-normal \\
				\midrule
				4    & 6   & 1 & 5 & 0 & 0   \\
				5    & 384   & 139   & 240 & 5 & 0  \\
				6    & 7225   & 4160 & 2927 & 136 & 2 \\
				7 & 65997 & 47088 & 17746 & 1119  & 44\\
				8 & 331084 & 283554 & 44713 & 2625  & 192 \\
				9 & 709417 &  709417  & 0 & 0 & 0 \\
				\bottomrule
			\end{tabular}
		\end{minipage}
	\end{center}
\end{table}

 \begin{table}[h]
	\begin{center}
		\begin{minipage}{290pt}
			\caption{Weight and \(k\)-normality distribution for \(f_2\)}\label{tab2}%
			\begin{tabular}{@{}lllllll@{}}
				\toprule
				Weight & Total  & 0-normal & 1-normal & 2-normal & 3-normal & 4-normal\\
				\midrule
				6    & 2   & 1 &1  & 0 & 0 & 0 \\
				7 & 47 & 28 & 15 & 4 & 0 & 0 \\
				8 & 373 & 256& 102 & 14 & 1 & 0  \\
				9 & 1401 &  1091 & 290 & 18 & 0 & 2 \\
				10 & 2821 & 2475 & 339 & 5 & 2 & 0\\
				\bottomrule
			\end{tabular}
		\end{minipage}
	\end{center}
\end{table}

 \end{example}

\newpage


\begin{thebibliography}{}
	
	
	\bibitem{Albert1956}
	Albert, A.A.: Fundamental Concepts of Higher Algebra. University of Chicago Press, Chicago (1956)
	
	
	
	\bibitem{Cohen1969}
	Cohen, S.: On irreducible polynomials of certain types in finite fields. Mathematical Proceedings of the Cambridge Philosophical Society  \textbf{66}(2), 335--344 (1969)
	
	
	
	\bibitem{Cohen1992}
	Cohen, S.: The explicit construction of irreducible polynomials over finite fields. Designs, Codes and Cryptography \textbf{2}(2), 169--174 (1992) 
	
	\bibitem{Daykin1965}
	Daykin, D.E.: Generation of irreducible polynomials over a finite field. The American Mathematical Monthly \textbf{72}(6), 646--648 (1965)
	
	\bibitem{Gathen1990}
	Gathen, J.v.z., Giesbrecht, M.: Constructing normal bases in finite fields. Journal of Symbolic Computation \textbf{10} (6), 547--570 (1990)
	
	\bibitem{Gao1993}
	Gao, S.: Normal bases over finite fields. PhD Thesis (1993)
	
	\bibitem{Huczynska2013}
	Huczynska, S.,Mullen, G., Panario, D., Thomson, D.: Existence and properties of k-normal elements over finite fields. Finite Fields and Their Applications \textbf{24}, 170--183 (2013)
	
	
	
	
	\bibitem{Kyuregyan2006}
	Kyuregyan, M.: Recurrent methods for constructing irreducible polynomials over \(\F_q\) of odd characteristics. Finite Fields and Their Applications \textbf{12}(3), 357--378 (2006)
	
	
	
	\bibitem{Kyureghyan2010}
	Kyureghyan, G., Kyuregyan, M.: Irreducible compositions of polynomials over finite fields. Designs, Codes and Cryptography \textbf{61} (3), 301--314 (2011)
	
	
	
	
	\bibitem{Kyureghyan2020}
	Kyureghyan, G., Kyuregyan, M.: A recurrent construction of irreducible polynomials of fixed degree over finite fields. Applicable Algebra in Engineering, Communication and Computing (2020)
	
	
	\bibitem{Kyuregyan2004}
	Kyuregyan, M.: Iterated constructions of irreducible polynomials over finite fields with linearly independent roots. Finite fields and Their Applications \textbf{10}(3), 323--341 (2004)
	
	
	\bibitem{McNay1995}
	McNay, G.: Topics in finite fields. Ph.D. Thesis at the University of Glasgow (1995)
	
	\bibitem{Meyn1995}
	Meyn, H.: Explicit N-polynomials of 2-power degree over finite fields. Designs, Codes and Cryptography \textbf{6}(2), 107--116 (1995)
	
	
	\bibitem{Panario2020}
	Panario, D., Reis, L., Wang, Q.: Construction of irreducible polynomials through rational transformations. Journal of Pure and Applied Algebra \textbf{224}(5), 106241 (2020)
	
	
	
	\bibitem{Semaev1989}
	Semaev, I.: Construction of polynomials irreducible over a finite field with linearly independent roots. Mathematics of the USSR-Sbornik \textbf{63} (2), 507 (1989)
	
	\bibitem{Ugolini2013}
	Ugolini, S.: Sequences of binary irreducible polynomials. Discrete Mathematics \textbf{313} (22), 2656--2662 (2013).
	
	
	
	
	
	
	
\end{thebibliography}
\end{document}